\theoremstyle{definition}
\newtheorem{definition}{Definition}[section]
\newtheorem{example}{Example}[section]
\newtheorem{observation}{Observation}[section]
\theoremstyle{plain}
\newtheorem{theorem}{Theorem}[section]
\newtheorem{corollary}{Corollary}[theorem]
\newtheorem{lemma}[theorem]{Lemma}
\newtheorem{proposition}[theorem]{Proposition}
\def\limind{\mathop{\oalign{lim\cr\hidewidth$\longrightarrow$\hidewidth\cr}}} 
\def\limproj{\mathop{\oalign{lim\cr\hidewidth$\longleftarrow$\hidewidth\cr}}} 
\newcommand{\compcent}[1]{\vcenter{\hbox{$#1\circ$}}}
\newcommand{\comp}{\mathbin{\mathchoice
{\compcent\scriptstyle}{\compcent\scriptstyle}
{\compcent\scriptscriptstyle}{\compcent\scriptscriptstyle}}} 
\newcommand{\op}{\operatorname }
\newcommand{\cali}{\mathscr }
\newcommand{\pe}{\widetilde}
\newcommand{\m}{\mathcal }
\title{Convergence of Tsirelson convolution systems of probability spaces}
\author{Remus Floricel}
\address{University of Regina, Department of Mathematics, Regina, SK, Canada}
\email{Remus.Floricel@uregina.ca}
\author{Patrick Melanson} 
\address{University of Regina, Department of Mathematics, Regina, SK, Canada}
\email{pdm456@uregina.ca}
 \subjclass[2020]
{Primary 28A33, 60A10, 60B15, 46L53, 46L55}
\keywords{convolution systems, continuous product of probability spaces, flow systems, projective limits of probability spaces, subproduct and product system of Hilbert spaces}
\thanks{The first author was supported by a Discovery Grant from NSERC}
\begin{document}

\maketitle
\begin{abstract}

We associate two specific projective systems of probability spaces to any Tsirelson convolution system. If the corresponding projective limit (in the sense of Bochner) exists, we say that the system is convergent in the first case and $K$-convergent in the second.
It is shown that convergent convolution systems give rise to continuous products of probability spaces, while $K$-convergent systems lead to flow systems. We then investigate the relationship between convergence and $K$-convergence, as well as their connections to two-parameter product systems of Hilbert spaces.

\end{abstract}

\section{Introduction}
Motivated by Arveson's theory of product systems and $E_0$-semigroups \cite{Arveson 89, Arveson-book}, B. Tsirelson initiated the study of two-parameter product systems of Hilbert spaces in \cite{Tsi03, Tsi04} (see also \cite{TV}) through probabilistic methods, including the analysis of stochastic processes and flows. Considering that convolution semigroups of measures can naturally be used to construct stochastic processes via Kolmogorov's extension theorem, which in turn lead to product systems, one of Tsirelson's initial steps was to introduce the concept of a convolution system of standard probability spaces as a generalization of convolution semigroups. However, unlike convolution semigroups of measures, convolution systems may not always give rise to a stochastic flow. 

One notable obstacle is that in various instances, projective limits of probability spaces may fail to exist \cite{Bochner}, making generalized Kolmogorov-type extension arguments rather challenging.
This issue has a long history: classical criteria for the existence of projective limits were established by Bochner \cite{Bochner}, and further developed by Choksi \cite{Chol}, Rao \cite{Rao71}, and Frol'{\i}k \cite{Frol}, among others. From a functional-analytic perspective, these measure-theoretic questions have natural analogues in the theory of noncommutative dynamics, notably in Arveson's framework \cite{Arveson-book}\cite{Lieb}. 
Later developments, such as Bhat-Skeide's work on Hilbert modules \cite{Bhat-Skeide} and the $C^*$-subproduct system approach in \cite{FK}, further connect these ideas with the theory of operator algebras. Our work positions Tsirelson convolution systems within this broader landscape, treating them as a natural setting for analyzing projective limit phenomena and their connections to stochastic flows and continuous product structures.

When a Tsirelson convolution system leads to a stochastic flow, or a flow system, as it is called in \cite[Sec. 3b]{Tsi04}, the flow system can be directly used to construct a continuous product of probability spaces (CPPS), largely due to the specific properties of the standard spaces used in this construction (see \cite[Sec. 3c]{Tsi04}). This, in turn, leads to a two-parameter product system of Hilbert spaces by passing to the corresponding $L^2$-spaces.

The main purpose of this paper is to show that continuous products of (not necessarily standard) probability spaces over a linearly ordered set can be obtained from some convolution systems, even if they may not admit flow systems. It also explores the interplay between convolution systems that admit CPPS and those that admit flow systems. This is achieved by associating two projective systems of probability spaces with a given convolution system (see Proposition \ref{projsysPst} and Proposition \ref{projsysPst2}). If the projective limits of these systems exist, they will be referred to as convergent and $K$-convergent, respectively. In these cases, they give rise to a continuous product of probability spaces and a flow system, respectively.

We show in Theorem \ref{PS} that the product system of $L^2$-Hilbert spaces of the projective CPPS associated with a convergent convolution system can be described in terms of the subproduct system of $L^2$-Hilbert spaces of the convolution system through an inductive-limit construction. We also show in Proposition \ref{Kimp} that any $K$-convergent system is convergent and find in Theorem \ref{Kimpa} the necessary and sufficient conditions for a convergent system to be $K$-convergent.\\\\
{\bf{Notation and Conventions.}} All notation used in this article is standard. We write $(\Omega, \m{F}, \mu)$ for a probability space, where $\m{F}$ is a $\sigma$-field of subsets of $\Omega$, and $\mu$ is a probability measure on $\m{F}$. Given two probability spaces $(\Omega, \m{F}, \mu)$ and $(\Omega', \m{F}', \mu')$, any measurable, measure-preserving map $T$ from $\Omega$ to $\Omega'$ will be referred to as a transformation of probability spaces, and we write $T: (\Omega, \m{F}, \mu) \to (\Omega', \m{F}', \mu')$. The product probability space $(\Omega\times \Omega', \m{F}\otimes \m{F}', \mu\times \mu')$ will often be denoted by $(\Omega, \m{F}, \mu) \times (\Omega', \m{F}', \mu')$.

Throughout this article, all equalities between measurable maps, sets, and functions are interpreted as strict pointwise equalities, rather than equivalences modulo sets of measure zero. This contrasts with Tsirelson's conventions \cite{Tsi04}, which rely on almost-everywhere equality. Our choice is motivated by consistency with classical projective-limit constructions \cite{Bochner} and with set-theoretic foundations.

Moreover, while Tsirelson's framework assumes that all measurable spaces are standard Borel spaces, we drop this assumption entirely in order to maintain the highest level of generality. This approach aligns with the development of an ``uncountable'' measure theory, as discussed in \cite{Tao}. In a similar spirit, we consider convolution systems over arbitrary linearly ordered sets rather than only over $\mathbb{R}$, as suggested in \cite{Tsi04}, and we do not assume the separability of Hilbert spaces, a condition crucial in Arveson's theory of product systems but one we avoid for broader applicability.

\section{Background}
\subsection{Convolution systems of probability spaces and subproduct systems of Hilbert spaces}
In this subsection, we present the main objects of study in this article, all of which were formally introduced by B. Tsirelson in \cite{Tsi03, Tsi04}, within the special setting discussed above.

\begin{definition}\label{kolmdef}(i) A convolution system $\cali{S}=\{\Omega_{s,t}, \m{F}_{s,t}, \mu_{s,t}, T_{r,s,t}\}_\mathbb{S}$ over a linearly ordered set $(\mathbb{S},\leq)$ consists of a family $\{(\Omega_{s,t}, \m{F}_{s,t}, \mu_{s,t})\,|\,s,\,t\in \mathbb{S},\,s<t\}$ of probability spaces and a family $\{T_{r,s,t}\,|\,r,\,s,\,t\in \mathbb{S},\,r<s<t\}$ of transformations $T_{r,s,t}: (\Omega_{r,s}, \m{F}_{r,s}, \mu_{r,s})\times (\Omega_{s,t},  \m{F}_{s,t},\mu_{s,t})\to (\Omega_{r,t}, \m{F}_{r,t}, \mu_{r,t})$, 
 called the multiplication of the system, which is associative in the sense that the diagram \begin{equation}\label{kass}
\begin{tikzcd}[column sep=60pt]
\Omega_{r,s}\times \Omega_{s,t}\times \Omega_{t,u} \arrow{r}{T_{r,s,t}\times \operatorname{id}_{\Omega_{t,u}}} \arrow[swap]{d}{\operatorname{id}_{\Omega_{r,s}}\times T_{s,t,u}}  & \Omega_{r,t}\times \Omega_{t,u} \arrow{d}{T_{r,t,u}} \\%
\Omega_{r,s}\times \Omega_{s,u} \arrow{r}{T_{r,s,u}}& \Omega_{r,u},
\end{tikzcd}\end{equation} 
 commutes 
 for all $r<s<t<u$ in $\mathbb{S}$. 
 
(ii) A continuous product of probability spaces, abbreviated as CPPS, is a  convolution system whose multiplication consists of bijective transformations of probability spaces.

(iii) A morphism of convolution systems, from $\cali{S}=\{\Omega_{s,t}, \m{F}_{s,t}, \mu_{s,t}, T_{r,s,t}\}_{\mathbb{S}}$ to  $\cali{S'}=\{\Omega_{s,t}', \m{F}_{s,t}', \mu_{s,t}', T_{r,s,t}'\}_{\mathbb{S}}$, is a family $\theta=\{\theta_{s,t}\}_{s<t}$ of transformations of probability spaces $\theta_{s,t}: (\Omega_{s,t}, \m{F}_{s,t}, \mu_{s,t})\to (\Omega_{s,t}', \m{F}_{s,t}', \mu_{s,t}')$, which makes the diagram
\begin{equation}\label{army}
\begin{tikzcd}[column sep=60pt]
\Omega_{r,s}\times \Omega_{s,t} \arrow{r}{\theta_{r,s}\times \theta_{s,t}} \arrow[swap]{d} {T_{r,s,t}}  & \Omega'_{r,s}\times \Omega'_{s,t} \arrow{d}{T_{r,s,t}'} \\%
\Omega_{r,t} \arrow{r}{\theta_{r,t}}& \Omega_{r,t}'
\end{tikzcd}\end{equation} 
commutative, for all $r<s<t$ in $\mathbb{S}$.

(iv) A flow system $((\Omega^*, \m{F}^*, P^*), \{X_{s,t}\})_{\cali{S}}$ over a convolution system $\cali{S}=\{\Omega_{s,t}, \m{F}_{s,t}, \mu_{s,t}, T_{r,s,t}\}_{\mathbb{S}}$ consists of a probability space $(\Omega^*, \m{F}^*, P^*)$ and a family $\{X_{s,t}\,|\,s,\,t\in\mathbb{S},\,s<t\}$ of transformations of probability spaces $X_{s,t}:(\Omega^*, \m{F}^*, P^*)\to (\Omega_{s,t}, \m{F}_{s,t}, \mu_{s,t})$ that satisfies the following conditions:
\begin{enumerate}
\item $\m{F}^*$ is the joint of the $\sigma$-fields $\m{F}_{s,t}^*=X_{s,t}^{-1}(\m{F}_{s,t})$, i.e., the $\sigma$-field generated by the field $\m{F}_0=\bigcup_{s<t}\m{F}_{s,t}^*$;
\item $X_{t_1,t_2}$,  $X_{t_2,t_3}$,....,  $X_{t_{n-1},t_n}$ are independent, for all $t_1<t_2<\cdots<t_n$ in $\mathbb{S}$;
\item $X_{r,t}(\omega)=T_{r,s,t}(X_{r,s}(\omega), X_{s,t}(\omega))$, $\omega\in \Omega^*$,  for all $r<s<t$ in $\mathbb{S}$.
\end{enumerate}
\end{definition} 

\begin{example}\label{exa1}
Let $(S, \mathcal{G})$ be a measurable semigroup, consisting of a semigroup $S$ and a $\sigma$-field $\mathcal{G}$ on $S$, such that the semigroup operation $(s,t) \mapsto st$ is $\mathcal{G}$-measurable. The convolution $\mu \ast \nu$ of two probability measures $\mu$ and $\nu$ on $\mathcal{G}$ is defined as the push-forward of the product measure $\mu \times \nu$ by the semigroup operation.

Suppose that $\mu$ is an idempotent probability measure on $\mathcal{G}$, i.e., $\mu \ast \mu = \mu$. Then it gives rise to a trivial convolution system $$\cali{S}_\mu = \{\Omega_{s,t}=S, \mathcal{F}_{s,t}= \mathcal{G}, \mu_{s,t} = \mu, T_{r,s,t}\}_\mathbb{S}$$ over any linearly ordered set $(\mathbb{S}, \leq)$, where $T_{r,s,t}$ is given by the semigroup operation for all $r < s < t$ in \(\mathbb{S}$.

We note that the structure of idempotent regular Borel probability measures on locally compact, Hausdorff, second-countable topological semigroups has been thoroughly studied (see, e.g., \cite[Th. 2.8]{HM}).
\end{example}

\begin{example}\label{exa2}
Let $\{\mu_t\}_{t \in \mathbb{R}}$ be a one-parameter convolution semigroup of probability measures over a measurable semigroup $(S, \mathcal{G})$. Then $$\cali{S}_{\{\mu_t\}} = \{\Omega_{s,t}, \mathcal{F}_{s,t}, \mu_{s,t}, T_{r,s,t}\}_\mathbb{R}$$ is a convolution system over the set of all real numbers, where $\Omega_{s,t} = S$, $\mathcal{F}_{s,t} = \mathcal{G}$, $\mu_{s,t} = \mu_{t-s}$, and $T_{r,s,t}$ is given by the semigroup operation for all real numbers $r < s < t$.
\end{example}

\begin{example}\label{exa3}
Let $(\mathbb{S}, \leq)$ be a linearly ordered set, equipped with the $\sigma$-field generated by the order topology. Let $(X, \mathcal{A})$ be a measurable space, and let $M = \{M_\omega\}_{\omega \in \Omega^*}$ be a random measure on $\mathbb{S} \times X$ with independent increments, defined on some probability space $(\Omega^*, \mathcal{F}^*, P^*)$.

For each $s, t \in \mathbb{S}$, with $s < t$, define
$$
\Omega_{s,t} = \{M_\omega\restriction_{(s,t] \times X} \mid \omega \in \Omega^*\},
$$
the collection of all random measure restrictions to the slice $(s,t] \times X$. Let $\mathcal{F}_{s,t}$ be the coordinate $\sigma$-field on $\Omega_{s,t}$, i.e., the smallest $\sigma$-field making all evaluation maps $\Omega_{s,t} \ni \mu \mapsto \mu(B)$ measurable for every measurable set $B \subseteq (s,t] \times X$. Let $\mu_{s,t}$ be the push-forward measure of $P^*$ under the restriction map $X_{s,t} : \Omega^* \to \Omega_{s,t}$, $
X_{s,t}(\omega) = M_\omega\restriction_{(s,t] \times X}.$

Additionally, we assume that $M$ has consistent increments, in the sense that for every $r < s < t$ in $\mathbb{S}$, and for every $\mu_1 \in \Omega_{r,s}$, $\mu_2 \in \Omega_{s,t}$, there exists an outcome $\omega \in \Omega^*$ such that
$$
M_\omega\restriction_{(r,s] \times X} = \mu_1
\quad \text{and} \quad
M_\omega\restriction_{(s,t] \times X} = \mu_2.
$$
Under this assumption, the concatenation map $T_{r,s,t} : \Omega_{r,s} \times \Omega_{s,t} \to \Omega_{r,t},$
defined by
$$
(T_{r,s,t}(\mu_1, \mu_2))(B) = \mu_1(B \cap ((r,s] \times X)) + \mu_2(B \cap ((s,t] \times X)),
$$
for every measurable subset $B \subseteq (r,t] \times X$, is well-defined, and the system $$\cali{S}_{\{M_\omega\}}=\{\Omega_{s,t}, \mathcal{F}_{s,t}, \mu_{s,t}, T_{r,s,t}\}_\mathbb{S}$$ is a convolution system. Moreover, by replacing $\mathcal{F}^*$, if needed, with the $\sigma$-field generated by $\bigcup_{s < t} X_{s,t}^{-1}(\mathcal{F}_{s,t})$, the system $((\Omega^*, \mathcal{F}^*, P^*), \{X_{s,t}\})$ becomes a flow system over the convolution system $\cali{S}_{\{M_\omega\}}$, as can be readily verified.

This construction applies to many classical examples, such as Poisson random measures, compensated jump processes, and other random measures with independent increments.
\end{example}

\begin{observation}\label{obsa1}
Tsirelson's approach to constructing CPPSs from convolution systems of standard probability spaces \cite[Section 3c]{Tsi04} does not directly use the convolution system itself. Instead, it relies on the existence of an associated ``almost-everywhere'' (a.e.) flow system. In \cite{Tsi04}, Tsirelson further established that a convolution system over $\mathbb{S} = (\mathbb{R}, \leq)$ admits an a.e. flow system on a standard probability space if and only if it is separable, as defined in Definition 3b4 of the same reference.

For instance, consider the group $S = (\mathbb{Z}/m\mathbb{Z}, +)$, equipped with the uniform distribution $\mu_m$. Then the convolution system $\cali{S}_{\mu_m}$, constructed as in Example \ref{exa1}, is not separable. In contrast, the system described in Example \ref{exa2}, based on $S = (\mathbb{R}, +)$, is separable.
\end{observation}

Convolution systems and CPPSs can be regarded as the probabilistic counterparts to the notions of two-parameter subproduct systems and product systems of Hilbert spaces, respectively. However, the former appear to be more versatile than the latter. The concept of two-parameter product systems of Hilbert spaces was first introduced by Tsirelson in \cite{Tsi03} under the name ``local continuous product of Hilbert spaces,'' obtained by excluding unbounded intervals from the definition of a continuous product of Hilbert spaces. Meanwhile, two-parameter subproduct systems were formally introduced in \cite{FK} under the name ``Tsirelson subproduct system,'' although the concept was well known long before their formal introduction.

\begin{definition}
A two-parameter subproduct system of Hilbert spaces, or subproduct system for short, $\cali{H}=\{H_{s,t}, U_{r,s,t}\}_\mathbb{S}$ over a linearly ordered set $(\mathbb{S},\leq)$ consists of a family  $\{H_{s,t}\,|\, s,\,t\in\mathbb{S},\, s<t\}$ of Hilbert spaces $H_{s,t}$ and a family $\{U_{r,s,t}\,|\,r,\,s,\,t\in\mathbb{S},\,r<s<t\}$ 
 of isometric operators $U_{r,s,t}: H_{r,t} \to H_{r,s}\otimes H_{s,t}$ that satisfy the co-associativity law 
 \begin{eqnarray}\label{Jan24cc}
 \left(1_{H_{r,s}}\otimes U_{s,t,u}\right)U_{r,s,u}=\left(U_{r,s,t}\otimes 1_{H_{t,u}}  \right)U_{r,t,u},
 \end{eqnarray} 
for all $r < s < t < u$ in $\mathbb{S}$.

If the operators $U_{r,s,t}$ are all unitary operators, then $\cali{H}=\{H_{s,t}, U_{r,s,t}\}_\mathbb{S}$ will be referred to as a two-parameter product system of Hilbert spaces, or product system for short.

The concept of morphism or isomorphism of subproduct systems is naturally defined, similarly to that of morphism or isomorphism of convolution systems.
\end{definition}

\begin{observation}\label{obsa2} Any convolution system $\cali{S}=\{\Omega_{s,t}, \m{F}_{s,t}, \mu_{s,t}, T_{r,s,t}\}_{\mathbb{S}}$  gives rise to the subproduct system $$L^2(\cali{S}):=\{L^2( \mu_{s,t}),\,U_{T_{r,s,t}}\}_{\mathbb{S}},$$ where $U_{T_{r,s,t}}$ is the Koopman isometry induced by $T_{r,s,t}$, i.e. $U_{T_{r,s,t}}(f) = f\comp T_{r,s,t}$ for $f\in L^2(\mu_{r,t})$. If $\cali{S}$ is a CPPS, then $L^2(\cali{S})$ is a product system. Moreover, any morphism $\theta=\{\theta_{s,t}\}_{0<s<t}$ of convolution systems, from $\cali{S}$ to $\cali{S}'$, induces the morphism $U_\theta=\{U_{\theta_{s,t}}\}_{0<s<t}$ of subproduct systems, from $L^2(\cali{S}')$ to $L^2(\cali{S})$.
 \end{observation}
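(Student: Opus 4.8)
The plan is to obtain every clause by transporting the defining diagrams of $\cali{S}$ through the contravariant Koopman functor $T\mapsto U_T$, using the canonical identification of the $L^2$-space of a product probability space with a Hilbert space tensor product. First I would record the two elementary facts on which this rests. (a) For any morphism $T:(\Omega,\m{F},\mu)\to(\Omega',\m{F}',\mu')$ in $\mathbf{Prob}$, the map $U_T(f)=f\comp T$ is a well-defined linear isometry $L^2(\mu')\to L^2(\mu)$, since $T$ measure-preserving gives $\int_\Omega |f\comp T|^2\,d\mu=\int_{\Omega'}|f|^2\,d(T_*\mu)=\int_{\Omega'}|f|^2\,d\mu'$, and the assignment is contravariantly functorial: $U_{S\comp T}=U_T\,U_S$ and $U_{\op{id}}=\op{id}$. (b) For probability spaces $(\Omega_1,\m{F}_1,\mu_1)$, $(\Omega_2,\m{F}_2,\mu_2)$ there is a canonical unitary $L^2(\mu_1\times\mu_2)\cong L^2(\mu_1)\otimes L^2(\mu_2)$ determined by $f_1\otimes f_2\mapsto\big((\omega_1,\omega_2)\mapsto f_1(\omega_1)f_2(\omega_2)\big)$; I would observe that this uses only density of the span of product functions in $L^2$ of a product measure, which holds for arbitrary probability spaces, so no standardness or separability is needed. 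A check on elementary tensors then shows that, under this identification, $U_{T_1\times T_2}=U_{T_1}\otimes U_{T_2}$ for morphisms $T_i:(\Omega_i,\m{F}_i,\mu_i)\to(\Omega_i',\m{F}_i',\mu_i')$.

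With (a) and (b) in hand, each $U_{T_{r,s,t}}$ is an isometry $L^2(\mu_{r,t})\to L^2(\mu_{r,s})\otimes L^2(\mu_{s,t})$, so the data of $L^2(\cali{S})=\{L^2(\mu_{s,t}),\,U_{T_{r,s,t}}\}_\mathbb{S}$ is of the right type. The co-associativity law \eqref{Jan24cc} is then precisely the image of the associativity square \eqref{kass} under $U_{(-)}$: applying the functor to the two composite arrows $\Omega_{r,s}\times\Omega_{s,t}\times\Omega_{t,u}\to\Omega_{r,u}$ and using $U_{S\comp T}=U_T\,U_S$ together with $U_{T_{r,s,t}\times\op{id}}=U_{T_{r,s,t}}\otimes 1_{H_{t,u}}$ and $U_{\op{id}\times T_{s,t,u}}=1_{H_{r,s}}\otimes U_{T_{s,t,u}}$ turns commutativity of \eqref{kass} into $(U_{T_{r,s,t}}\otimes 1_{H_{t,u}})U_{T_{r,t,u}}=(1_{H_{r,s}}\otimes U_{T_{s,t,u}})U_{T_{r,s,u}}$, which is \eqref{Jan24cc}. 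If $\cali{S}$ is a CPPS, each $T_{r,s,t}$ is an isomorphism in $\mathbf{Prob}$, so $U_{T_{r,s,t}}$ has the two-sided inverse $U_{T_{r,s,t}^{-1}}$ by functoriality; being a surjective isometry, it is unitary, and $L^2(\cali{S})$ is a product system.

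For the last assertion, given a morphism $\theta=\{\theta_{s,t}\}$ from $\cali{S}$ to $\cali{S}'$, I would put $U_\theta=\{U_{\theta_{s,t}}\}$ with $U_{\theta_{s,t}}:L^2(\mu_{s,t}')\to L^2(\mu_{s,t})$ the Koopman isometry. Applying $U_{(-)}$ to the commuting square \eqref{army}, exactly as before, converts it into $(U_{\theta_{r,s}}\otimes U_{\theta_{s,t}})\,U_{T_{r,s,t}'}=U_{T_{r,s,t}}\,U_{\theta_{r,t}}$, which is the intertwining identity defining a morphism of subproduct systems from $L^2(\cali{S}')$ to $L^2(\cali{S})$; the reversal of direction reflects the contravariance of $T\mapsto U_T$.

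I expect the only genuinely delicate point to be fact (b): identifying $L^2$ of a product probability space with the Hilbert tensor product and verifying its compatibility with Koopman operators, in the present generality where the spaces need not be standard nor the Hilbert spaces separable. Once that is in place, every remaining clause is a formal transport of the diagrams \eqref{kass} and \eqref{army} through the functor $T\mapsto U_T$.
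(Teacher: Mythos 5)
Your argument is correct and is exactly the standard one the authors intend: the paper states this as an Observation without proof, relying implicitly on the contravariant functoriality of $T\mapsto U_T$, the canonical unitary $L^2(\mu_1\times\mu_2)\cong L^2(\mu_1)\otimes L^2(\mu_2)$ (valid for arbitrary probability spaces), and the transport of the diagrams (\ref{kass}) and (\ref{army}). Your identification of fact (b) as the only point needing care in the non-standard, non-separable setting is apt, and your verification of it is sound.
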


\subsection{Projective systems of probability spaces} In this subsection, we briefly recall the concept of a projective system and its projective limit in the sense of Bochner \cite{Bochner}, as this framework plays a central role in the present paper. For a more detailed discussion of the topic, we refer the reader to \cite{Bochner, Rao 71, Chol}.

A projective system of probability spaces over a directed set $(I, \leq)$ is a pair $\{(\Omega_i, \m{F}_i, \mu_i)\}, T_{i,j}\}$ consisting of a family  $\{(\Omega_i, \m{F}_i, \mu_i)\}_{i\in I}$ of probability spaces and a family  $\{T_{i,j}\}_{i,\,j\in I,\,i\leq j}$ of transformations of probability spaces $T_{i,j}:(\Omega_j , \m{F}_j , \mu_j) \to (\Omega_i, \m{F}_i, \mu_i)$, $i\leq j$, satisfying the compatibility condition  $T_{i,j}T_{j,k}=T_{i,k}$, for all $i\leq j\leq k$, and $T_{i,i}=\op{id}_{\Omega_i}$.

Let $\Omega= \limproj\{\Omega_i, T_{i,j}\}$ be the projective limit of the projective system of sets $\{\Omega_i, T_{i,j}\}$ over $(I, \leq)$, i.e., $\Omega$ is the subset of the cartesian product $\Omega^I = \bigtimes _{i\in I}\Omega_i$ consisting of those elements
$\omega= \{\omega_i\}_{i\in I}$, called threads, such that for each $i\leq j$ in $I$, $\omega_i = T_{i,j}(\omega_j)$. It is worth noting that the set $\Omega$ may be empty, even when the maps $T_{i,j}$ are surjective \cite{Henkin, Waterhouse}. If $\Omega$ is non-empty and the coordinate projections $$T_i:\Omega\ni  \{\omega_j\}_{j\in I}\mapsto\omega_i\in \Omega_i$$ are all surjective, then, using terminology from \cite{Bochner}, we say that the projective system of sets $\{\Omega_i, T_{i,j}\}$ is {\em simply maximal}. In this case, the maps $T_{i,j}$ must also be all surjective.

Assuming further that the system  $\{\Omega_i, T_{i,j}\}$ is simply maximal, we consider the field $\m{F}_0=\bigcup_{i\in I} T_i^{-1}(\m{F}_i)$ and the joint
$\m{F} = \bigvee_{i\in I} T_i^{-1}(\m{F}_i)$ of the $\sigma$-fields $T_i^{-1}(\m{F}_i)$.
Let $\mu :\m{F}_0\to [0,1]$ be the finitely
additive set function, defined uniquely by the equation \begin{eqnarray}\label{crr}\mu(T_i^{-1}(A)) =\mu_i(A),\;A\in \m{F}_i.\end{eqnarray}
Generally, $\mu$ may fail to be $\sigma$-additive and thus cannot be extended to $\mathcal{F}$.
\begin{definition}
A simply maximal projective system of probability spaces $\{(\Omega_i, \m{F}_i, \mu_i), T_{i,j}\}$ is said to be convergent if $\mu$ is $\sigma$-additive on $\m{F}_0$.\end{definition}  If this is the case,  then we shall use the same notation, $\mu$, to denote the 
(unique) $\sigma$-additive extension of $\mu$ to $\m{F}$. The resulting probability space $$(\Omega,\m{F}, \mu):=\limproj_{I}\{(\Omega_i, \m{F}_i, \mu_i), T_{i,j}\}$$ is called
the projective limit of $\{(\Omega_i , \m{F}_i, \mu_i), T_{i,j}\}$. 

Numerous criteria for the existence of a projective limit have been identified (see \cite{Rao71, Chol, Frol} and the references therein). In this paper, we employ a straightforward yet effective approach that carries a categorical flavour: if $\{(\Omega_i, \m{F}_i, \mu_i)\}, T_{i,j}\}$ admits a lower bound, consisting of a probability space $(N, \m{N}, \nu)$ and a family $\{S_i\}_{i\in I}$ of surjective transformations $S_i: (N, \m{N}, \nu)\to (\Omega_i, \m{F}_i, \mu_i)$ such that $\m{N}=\bigvee_{i\in I}S_i^{-1}(\m{F}_i)$ and $T_{i,j}S_j=S_i$, for all $i,\,j\in I$ with $i\leq j$, then the set-function $\mu$ in (\ref{crr}) must be $\sigma$-additive, and thus the projective limit exists. From this perspective, the projective limit $(\Omega,\m{F}, \mu)$ is a greatest lower bound in the sense that any other lower bound factors through it via a surjective transformation. Consequently, it is unique up to a bijective transformation of such greatest lower bounds, and also satisfies $(\Omega,\m{F}, \mu)=\limproj_{I_0}\{(\Omega_i, \m{F}_i, \mu_i), T_{i,j}\}$, for every cofinal subset $I_0$ of $(I, \leq)$.

\subsection{Finite Partitions}
Let $(\mathbb{S},\leq)$ be a linearly ordered set. For any two elements $s,\,t\in \mathbb{S}$, $s < t$, consider the partially ordered set $(\m{K}_{s,t},\subseteq)$ of all finite partitions of the interval $[s,t]$, ordered by inclusion. We notice that if $I=\{s=\iota_0<\iota_1<\iota_2<\,\dots<\iota_m<\iota_{m+1}=t\}\in\m{K}_{s,t}$, then any refinement $J\in \m{K}_{s,t}$ of $I$, i.e., $I\subseteq J$, can be written as \begin{eqnarray}\label{decoopa}J=I_0\cup I_1\cup \cdots \cup I_m,\end{eqnarray} where $I_i=\{j\in J, \iota_i\leq j\leq \iota_{i+1}\}=\{\iota_i=\iota_{i_0}<\iota_{i_1}<\dots<\iota_{i_{n_{I_i}}}<\iota_{i+1}\}\in \m{K}_{\iota_i,\iota_{i+1}},$ for some $n_{I_i}\in\mathbb{N}$ depending on the partition $I_i$, for all $0\leq i\leq m$. 
 
 We also consider the set $\m{K}=\bigcup _{s<t}\m{K}_{s,t}$ of all finite subsets $I\subset \mathbb{S}$, $|I|\geq 2$, ordered by inclusion as well. As above, if $I\in\m{K}_{s,t}$ and  $J\in \m{K}$ is a refinement of $I$, then $J$ can be written as 

\begin{eqnarray}\label{abba} J={I}_{L}\cup \pe{I}\cup {I}_R,\end{eqnarray} where ${I}_L=\{j\in J\mid j\leq \iota_0\}$, ${I}_R=\{j\in J\mid j\geq \iota_{m+1}\}$ and $\pe{I} = I_0\cup \cdots \cup I_m\in\m{K}_{s,t}$, with $I_k$ defined as above. Note that $\pe{I}$ is a refinement of $I$.

\section{Convergent convolution systems and associated CPPSs}\label{ch.1} 

Let $\cali{S}=\{\Omega_{s,t}, \m{F}_{s,t}, \mu_{s,t}, T_{r,s,t}\}_\mathbb{S}$ be a convolution system.
 For any two elements $s,\,t\in \mathbb{S}$,  $s < t$, and any partition $I\in \m{K}_{s,t}$, $I=\{s=\iota_0<\iota_1<\iota_2<\,\dots<\iota_m<\iota_{m+1}=t\},$ we consider the product measure space $(\Omega_I, \m{F}_I, \mu_I)$ over $I$, where $\Omega_I=\Omega_{\iota_0, \iota_1}\times \Omega_{\iota_1,\iota_2}\times \dots\times \Omega_{\iota_m,\iota_{m+1}}$, $\m{F}_I=\m{F}_{\iota_0, \iota_1}\otimes \m{F}_{\iota_1,\iota_2}\otimes \dots\otimes \m{F}_{\iota_m,\iota_{m+1}},$ and $\mu_I=\mu_{\iota_0, \iota_1}\times \mu_{\iota_1,\iota_2}\times \dots\times \mu_{\iota_m,\iota_{m+1}}.$
 
For any refinement $J\in \m{K}_{s,t}$ of $I$, we also consider the mapping $T_{I,J}:\Omega_J\to \Omega_I$ defined as follows: 
 
 (i) If $I$ is the trivial partition, $I=\{s,t\}$, and $J=\{s=j_0<j_1<j_2<\,\dots<j_n<j_{n+1}=t\},$ then $T_{I, J}: \Omega_J \to \Omega_{s,t}$ is defined iteratively as
\begin{eqnarray*}\label{hooop}T_{I, J}=\begin{cases} T_{j_0,j_1, j_2}&\,\mbox{if}\, n=1\\ T_{j_{0},j_{n},j_{ n+1}}\comp(T_{\{j_0,j_n\},J\setminus \{j_{n+1}\}}\times \operatorname{id}_{\Omega_{j_n,j_{n+1}}})&\,\mbox{if}\, n\geq 2\end{cases}\end{eqnarray*} where
$J\setminus\{j_{n+1}\}\in \m{K}_{s,j_n}$ is the partition obtained by removing the endpoint $t=j_{n+1}$ from $J$.

(ii) If $I=\{s=\iota_0<\iota_1<\iota_2<\,\dots<\iota_m<\iota_{m+1}=t\},$ $I\subseteq J$, are arbitrary partitions, then by writing $J$ it as in (\ref{decoopa}), we define  \begin{eqnarray*}\label{jan16}T_{I,J}=T_{\{\iota_0,\iota_1\}, I_0}\times T_{\{\iota_1,\iota_2\}, I_1}\times \dots T_{\{\iota_m,\iota_{m+1}\}, I_m},\end{eqnarray*}
where $T_{\{\iota_k,\iota_{k+1}\}, I_k}:\Omega_{I_k}\to \Omega_{\iota_k,\iota_{k+1}}$ are as in (i).

(iii) If $I=J$, we set $T_{I,I}=\op{id}_{\Omega_I}$.

The construction carried out above ensures that $T_{I,J}:(\Omega_J, \m{F}_J, \mu_J)\to (\Omega_I, \m{F}_I, \mu_I )$ is a transformation of probability spaces, for all $I,\,J\in \m{K}_{s,t}$, $I\subseteq J$. The newly created system is projective, as shown below.

\begin{proposition}\label{projsysPst}
Let $\cali{S}=\{\Omega_{s,t}, \m{F}_{s,t}, \mu_{s,t}, T_{r,s,t}\}_\mathbb{S}$ be a convolution system. Then the system $(\{(\Omega_I, \m{F}_I, \mu_I)\}, \{T_{I,J}\})$ is a projective system of probability spaces over the directed set $(\mathcal{K}_{s,t},\subseteq)$, for all $s<t$ in $\mathbb{S}$.
\end{proposition}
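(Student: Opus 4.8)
The plan is to verify the three defining properties of a projective system over $(\m{K}_{s,t},\subseteq)$: that this poset is directed, that each $T_{I,J}$ is a morphism of probability spaces, and that $T_{I,I}=\op{id}_{\Omega_I}$ together with the cocycle identity $T_{I,J}\,T_{J,K}=T_{I,K}$ hold whenever $I\subseteq J\subseteq K$ in $\m{K}_{s,t}$. Directedness is immediate, since for $I,J\in\m{K}_{s,t}$ the union $I\cup J$ is again a finite partition of $[s,t]$ refining both, and $T_{I,I}=\op{id}_{\Omega_I}$ is built into the definition (iii). That $T_{I,J}$ is measurable and measure-preserving I would obtain by induction on $|J|$: in the recursion (i), $T_{\{j_0,j_n\},J\setminus\{j_{n+1}\}}$ pushes $\mu_{J\setminus\{j_{n+1}\}}$ forward to $\mu_{j_0,j_n}$ by the inductive hypothesis, hence $T_{\{j_0,j_n\},J\setminus\{j_{n+1}\}}\times\op{id}_{\Omega_{j_n,j_{n+1}}}$ pushes $\mu_J$ forward to $\mu_{j_0,j_n}\times\mu_{j_n,j_{n+1}}$, which $T_{j_0,j_n,j_{n+1}}$ then carries to $\mu_{s,t}$ because $T_{r,s,t}$ is a morphism of $\cali{S}$; case (ii) follows because a product of measure-preserving maps is measure-preserving between the corresponding product spaces.

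The heart of the matter is the cocycle identity, which I would derive from a generalized associativity law for the total multiplication maps $T_{\{a,b\},K}$. The key step is a \emph{splitting identity}: for $a<c<b$ in $\mathbb{S}$ and any $K\in\m{K}_{a,b}$ containing $c$, writing $K=K_L\cup K_R$ with $K_L=K\cap[a,c]\in\m{K}_{a,c}$ and $K_R=K\cap[c,b]\in\m{K}_{c,b}$, one has
\[
T_{\{a,b\},K}=T_{a,c,b}\circ\bigl(T_{\{a,c\},K_L}\times T_{\{c,b\},K_R}\bigr).
\]
I would prove this by induction on $|K_R|$. If $K_R=\{c,b\}$, then $T_{\{c,b\},K_R}=\op{id}_{\Omega_{c,b}}$ and the identity follows at once from the recursion (i). For the inductive step, write $K=\{a=k_0<\dots<k_{N+1}=b\}$, peel off the endpoint $b$ using (i), apply the inductive hypothesis to $K\setminus\{b\}\in\m{K}_{a,k_N}$, re-associate the resulting composite of product maps via $(\phi\circ\psi)\times\op{id}=(\phi\times\op{id})\circ(\psi\times\op{id})$, and invoke the associativity square \eqref{kass} with $(r,s,t,u)=(a,c,k_N,b)$; recognizing the leftover factor as $T_{\{c,b\},K_R}$ by one more use of (i) (peeling $b$ from $K_R$) yields the claim. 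Iterating the splitting identity along the internal points of an intermediate partition $J=\{a=j_0<\dots<j_{p+1}=b\}$, with $K^{(q)}=K\cap[j_q,j_{q+1}]$, then gives
\[
T_{\{a,b\},K}=T_{\{a,b\},J}\circ\bigl(T_{\{j_0,j_1\},K^{(0)}}\times\cdots\times T_{\{j_p,j_{p+1}\},K^{(p)}}\bigr)=T_{\{a,b\},J}\circ T_{J,K},
\]
the last equality being the definition (ii) of $T_{J,K}$.

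Finally, for arbitrary $I\subseteq J\subseteq K$ in $\m{K}_{s,t}$ I would argue block by block over the gaps of $I=\{s=\iota_0<\dots<\iota_{m+1}=t\}$. Using the decompositions \eqref{decoopa} of $J$ and $K$ relative to $I$, write $J=J_0\cup\dots\cup J_m$ and $K=K_0\cup\dots\cup K_m$ with $J_i\subseteq K_i$ in $\m{K}_{\iota_i,\iota_{i+1}}$. By definition (ii) the maps $T_{I,J}$ and $T_{I,K}$ are the products over $i$ of the block maps $T_{\{\iota_i,\iota_{i+1}\},J_i}$ and $T_{\{\iota_i,\iota_{i+1}\},K_i}$, while the factor of $T_{J,K}$ associated with the block $[\iota_i,\iota_{i+1}]$ is precisely $T_{J_i,K_i}$; applying the displayed identity on each block with $(a,b)=(\iota_i,\iota_{i+1})$ gives $T_{\{\iota_i,\iota_{i+1}\},J_i}\circ T_{J_i,K_i}=T_{\{\iota_i,\iota_{i+1}\},K_i}$, and taking the product over $i$ completes the proof. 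I expect the only real difficulty to be bookkeeping: keeping the right-to-left recursive definition of $T_{I,J}$ synchronized with the block decompositions and applying \eqref{kass} in exactly the right places; no idea beyond the associativity of the convolution system is needed.
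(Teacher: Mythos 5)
Your proof is correct and follows essentially the same route as the paper: reduce the cocycle identity $T_{I,K}=T_{I,J}T_{J,K}$ to the trivial-partition case and then argue block by block via the decomposition \eqref{decoopa}. The only place you go beyond the paper is the trivial-partition case itself, which the paper merely asserts ``can be deduced easily''; your splitting identity $T_{\{a,b\},K}=T_{a,c,b}\circ\bigl(T_{\{a,c\},K_L}\times T_{\{c,b\},K_R}\bigr)$, proved by induction on $|K_R|$ with \eqref{kass} applied at $(a,c,k_N,b)$, is exactly the missing argument and is carried out correctly.
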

\begin{proof} 
Let $s<t$ be two elements of $\mathbb{S}$. We show that $T_{I,K}=T_{I,J}T_{J,K}$, for all partitions $I,\,J,\,K\in \mathcal{K}_{s,t}$, $I\subseteq J\subseteq K$. 
If $I=\{s,t\}$ is the trivial partition, then the identity can be deduced easily. For a general partition $I=\{s=\iota_0<\iota_1<\cdots <\iota_m<\iota_{m+1}=t\}$, we use  (\ref{decoopa}) to decompose $J$ in terms of $I$ as $J=I_0\cup I_1\cup \cdots \cup I_m$, where $I_i\in  \m{K}_{\iota_i,\iota_{i+1}}$, for all $0\leq i\leq m$. 
Similarly, we can decompose $K$ in terms of $J$, thus $K=J_0\cup J_1\cup \cdots \cup J_\ell$, where $J_0 \in \m{K}_{\iota_{0_0},\iota_{0_1}}$, $J_1\in \mathcal{K}_{\iota_{0_1},\iota_{0_2}}$, $\cdots$, $J_\ell \in \mathcal{K}_{\iota_{m_ {n_{I_m}}}, \iota_{m+1}}$. However, we can also decompose $K$ in terms of $I$. Explicitly, we get $K=I'_0\cup I'_1\cup \cdots \cup I'_m$, where $I'_0= \bigcup\limits_{i=0}^{n_{I_0}} J_i$ and $I'_i= \bigcup\limits_{j=n_{I_{i-1}+1}}^{n_{I_i}} J_j$ for all $1\leq i \leq m$. Via this new decomposition, we have that $T_{I_0,I'_0}= T_{\{\iota_{0_0},\iota_{0_1}\}, J_0}\times T_{\{\iota_{0_1}, \iota_{0_2}\}, J_1}\times \cdots \times T_{\{\iota_{0_{n_{I_0}}}, \iota_{1}\}, J_{n_{I_0}}}$, and  $    T_{I_k,I'_k} = T_{\{\iota_{k_0}, \iota_{k_1}\},  J_{n_{I_{k-1}}+1}} \times T_{\{\iota_{k_1}, \iota_{k_2}\}, J_{n_{I_{k-1}}+2}}\times \cdots \times T_{\{\iota_{k_{n_{I_k}}}, \iota_{k+1}\}, J_{n_{I_k}}}$, for all $1\leq k\leq m$, so $T_{J,K}=\bigtimes_{k=0}^mT_{I_k, I_k'}$. Consequently, $$
   T_{I,K} =\bigtimes_{k=0}^m T_{\{\iota_k,\iota_{k+1}\}, I_k'}=\bigtimes_{k=0}^m T_{\{\iota_k,\iota_{k+1}\},I_k}T_{I_k,I_k'}
       =T_{I,J} T_{J,K}$$ as required.
\end{proof}

\begin{definition} A convolution system $\cali{S}=\{\Omega_{s,t}, \m{F}_{s,t}, \mu_{s,t}, T_{r,s,t}\}_\mathbb{S}$ is said to be convergent if for any two elements $s<t$ of $\mathbb{S}$, the projective system  $(\{(\Omega_I, \m{F}_I, \mu_I)\}, \{T_{I,J}\})$ over  $(\mathcal{K}_{s,t},\subseteq)$ is convergent.\end{definition} We denote by $(\Omega^\flat_{s,t}, \m{F}_{s,t}^\flat, \mu_{s,t}^\flat)$ the projective limit: $$(\Omega^\flat_{s,t}, \m{F}_{s,t}^\flat, \mu_{s,t}^\flat)=\varprojlim_{\mathcal{K}_{s,t}} \{(\Omega_I, \m{F}_I, \mu_I), T_{I,J}\},
$$ of the convergent projective system $\{(\Omega_I, \m{F}_I, \mu_I), T_{I,J}\}$ over $(\mathcal{K}_{s,t},\subseteq)$. We also consider the coordinate projections $T^\flat_I:  (\Omega^\flat_{s,t}, \m{F}_{s,t}^\flat, \mu_{s,t}^\flat)\to (\Omega_I, \m{F}_I, \mu_I)$, for all $I\in\m{K}_{s,t}$. These transformations satisfy the compatibility relation $T_I^\flat=T_{I,J}T_J^\flat$, for all $I,\,J\in \m{K}_{s,t}$, $I\subseteq J$.

The following result enables the construction of a CPPS from a convergent convolution system.
\begin{theorem}\label{projequiv}
 Let $\cali{S}=\{\Omega_{s,t}, \m{F}_{s,t}, \mu_{s,t}, T_{r,s,t}\}_\mathbb{S}$ be a convergent convolution system. For any $r<s<t$ in $\mathbb{S}$, there exists a bijective transformation of probability spaces  $$T^\flat_{r,s,t}: (\Omega^\flat_{r,s}, \m{F}_{r,s}^\flat, \mu_{r,s}^\flat)\times (\Omega^\flat_{s,t}, \m{F}_{s,t}^\flat,  \mu_{s,t}^\flat)\to (\Omega^\flat_{r,t}, \m{F}_{r,t}^\flat, \mu_{r,t}^\flat)$$
  that makes the system $\cali{S}^\flat= \{\Omega_{s,t}^\flat, \m{F}_{s,t}^\flat, \mu_{s,t}^\flat, T_{r,s,t}^\flat\}_\mathbb{S}$ into a CPPS. 
 \end{theorem}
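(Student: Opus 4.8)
The plan is to restrict, for fixed $r<s<t$, the projective system of Proposition \ref{projsysPst} to the subfamily of partitions of $[r,t]$ that contain the point $s$, on which the system splits as a product, and then to read off $T^\flat_{r,s,t}$ from that splitting.

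First I would set $\m{K}^s_{r,t}=\{I\in\m{K}_{r,t}\mid s\in I\}$. This is cofinal in $(\m{K}_{r,t},\subseteq)$, since $I\cup\{s\}$ refines $I$, and $I\mapsto(I\cap[r,s],\,I\cap[s,t])=:(I',I'')$ is an isomorphism of directed sets $\m{K}^s_{r,t}\to\m{K}_{r,s}\times\m{K}_{s,t}$. Unwinding case (ii) of the construction preceding Proposition \ref{projsysPst}: when $s\in I\subseteq J$, the block decomposition (\ref{decoopa}) of $J$ over $I$ breaks at $s$ into the block decompositions of $J'$ over $I'$ and of $J''$ over $I''$, so that $\Omega_I=\Omega_{I'}\times\Omega_{I''}$, $\m{F}_I=\m{F}_{I'}\otimes\m{F}_{I''}$, $\mu_I=\mu_{I'}\times\mu_{I''}$, and $T_{I,J}=T_{I',J'}\times T_{I'',J''}$. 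Thus, over $\m{K}^s_{r,t}$, the system $\{(\Omega_I,\m{F}_I,\mu_I),T_{I,J}\}$ is the product of the two systems over $\m{K}_{r,s}$ and $\m{K}_{s,t}$.

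Next I would identify the limit. By convergence and the cofinality property of projective limits recalled in Section 2.2, $(\Omega^\flat_{r,t},\m{F}^\flat_{r,t},\mu^\flat_{r,t})=\varprojlim_{\m{K}^s_{r,t}}\{(\Omega_I,\m{F}_I,\mu_I),T_{I,J}\}$. On threads, the splitting above shows that every thread $(\omega_I)_{I\in\m{K}^s_{r,t}}$ is of the form $\omega_I=(\omega'_{I'},\omega''_{I''})$ for a unique pair of threads $(\omega'_{I'})\in\Omega^\flat_{r,s}$, $(\omega''_{I''})\in\Omega^\flat_{s,t}$ — that $\omega'$ is independent of $I''$ follows from compatibility with $J'=I'$ together with the directedness of $\m{K}_{s,t}$, and symmetrically for $\omega''$. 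Hence we obtain a bijection $\Omega^\flat_{r,t}\cong\Omega^\flat_{r,s}\times\Omega^\flat_{s,t}$ intertwining $T^\flat_I$ with $T^\flat_{I'}\times T^\flat_{I''}$; I would then check it is an isomorphism of probability spaces, using $\m{F}^\flat_{r,s}=\bigvee_{I'}(T^\flat_{I'})^{-1}(\m{F}_{I'})$ (and likewise for $s,t$) to get $\bigvee_{I\in\m{K}^s_{r,t}}(T^\flat_{I'}\times T^\flat_{I''})^{-1}(\m{F}_I)=\m{F}^\flat_{r,s}\otimes\m{F}^\flat_{s,t}$, and noting that on the generating $\pi$-system of rectangles $(T^\flat_{I'})^{-1}(A')\times(T^\flat_{I''})^{-1}(A'')=(T^\flat_I)^{-1}(A'\times A'')$ the transported measure takes the value $\mu_I(A'\times A'')=\mu_{I'}(A')\,\mu_{I''}(A'')$, which agrees with $\mu^\flat_{r,s}\times\mu^\flat_{s,t}$ there; a Dynkin argument then forces the two measures to coincide. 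I define $T^\flat_{r,s,t}$ to be the resulting isomorphism $(\Omega^\flat_{r,s},\m{F}^\flat_{r,s},\mu^\flat_{r,s})\times(\Omega^\flat_{s,t},\m{F}^\flat_{s,t},\mu^\flat_{s,t})\to(\Omega^\flat_{r,t},\m{F}^\flat_{r,t},\mu^\flat_{r,t})$; concretely it sends $(\alpha,\beta)$ to the thread with $I$-component $(\alpha_{I'},\beta_{I''})$.

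Finally, for $r<s<t<u$ I would verify the associativity square (\ref{kass}) for $\cali{S}^\flat$. The partitions of $[r,u]$ containing both $s$ and $t$ are cofinal in $\m{K}_{r,u}$, and for such $I$ one has $\Omega_I=\Omega_{I_1}\times\Omega_{I_2}\times\Omega_{I_3}$ with $I_1\in\m{K}_{r,s}$, $I_2\in\m{K}_{s,t}$, $I_3\in\m{K}_{t,u}$. Using the explicit description of $T^\flat$ from the previous step and the associativity of the Cartesian product, both $T^\flat_{r,t,u}\circ(T^\flat_{r,s,t}\times\op{id})$ and $T^\flat_{r,s,u}\circ(\op{id}\times T^\flat_{s,t,u})$ send a triple $(\alpha,\beta,\gamma)$ to the thread with $I$-component $(\alpha_{I_1},\beta_{I_2},\gamma_{I_3})$; since threads agreeing on a cofinal subfamily are equal, the two composites coincide, so $\cali{S}^\flat$ is a CPPS. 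I expect the main obstacle to be purely the bookkeeping in this last step — tracking which partition points land in which sub-block under iterated applications of $T^\flat$ — rather than anything conceptual; the one genuinely measure-theoretic point, that the join $\bigvee(T^\flat_{I'}\times T^\flat_{I''})^{-1}(\m{F}_I)$ is exactly the product $\sigma$-field carrying the product measure, is routine once the thread splitting is established.
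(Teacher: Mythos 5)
Your proposal is correct and follows essentially the same route as the paper's proof: restrict to the cofinal family of partitions of $[r,t]$ containing $s$, identify it with $\m{K}_{r,s}\times\m{K}_{s,t}$ so the projective system splits as a product, obtain $T^\flat_{r,s,t}$ from the resulting identification of limits via the intertwining relation $T^\flat_I T^\flat_{r,s,t}=T^\flat_{I_s}\times T^\flat_{{}_sI}$, and prove associativity on the cofinal family of partitions containing both $s$ and $t$. The only difference is one of detail: where the paper simply asserts that the product of the two projective limits is the least upper bound of the split system, you verify this explicitly on threads and with a $\pi$-system/Dynkin argument for the measures, which is a legitimate (and slightly more careful) filling-in of the same step.
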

 \begin{proof}
Let $r,\, t\in\mathbb{S}$ with $r<t$. For any $s\in\mathbb{S}$ satisfying $r<s<t$, let $\m{K}_{r,s,t}$ be the set of all partitions $I\in \m{K}_{r,t}$ such that $s\in I$. Since $\m{K}_{r,s,t}$ is a cofinal subset of $(\mathcal{K}_{r,t},\subseteq)$, we have that $(\Omega^\flat_{s,t}, \m{F}_{s,t}^\flat, \mu_{s,t}^\flat)$ is the projective limit of the convergent projective system $\{(\Omega_I, \m{F}_I, \mu_I), T_{I,J}\}$ over $(\mathcal{K}_{r, s,t}, \subseteq)$. 
We freely identify the poset $(\m{K}_{r, s,t}, \subseteq)$ with $\m{K}_{r,s}\times \m{K}_{s,t}$, considered with the product order, via the order isomorphism $\m{K}_{r, s,t}\ni I=I_s\cup {}_sI\mapsto I_s\times {}_sI\in \m{K}_{r,s}\times \m{K}_{s,t}$. Given that $(\Omega_I, \m{F}_I, \mu_I)=(\Omega_{I_s}, \m{F}_{I_s}, \mu_{I_s})\times (\Omega_{{}_sI}, \m{F}_{{}_sI}, \mu_{{}_sI})$, for all $ I\in \m{K}_{r,s,t}$, and $T_{I,J}=T_{I_s, J_s}\times T_{{}_sI, {}_sJ}$, for all $I\subseteq J$ in  $\m{K}_{r,t}$, and noting that the product of two convergent projective systems is again convergent, with the projective limit given by the product of the individual limits, we conclude that the product probability space $(\Omega^\flat_{r,s}, \m{F}_{r,s}^\flat, \mu_{r,s}^\flat)\times (\Omega^\flat_{s,t}, \m{F}_{s,t}^\flat, \mu_{s,t}^\flat)$ is a greatest lower bound of the convergent projective system $\{(\Omega_I, \m{F}_I, \mu_I), T_{I,J}\}$ over  $(\mathcal{K}_{s,t},\subseteq)$. Consequently, there exists a bijective transformation of probability spaces $T^\flat_{r,s,t}: (\Omega^\flat_{r,s}, \m{F}_{r,s}^\flat, \mu_{r,s}^\flat)\times (\Omega^\flat_{s,t}, \m{F}_{s,t}^\flat, \mu_{s,t}^\flat)\to (\Omega^\flat_{r,t}, \m{F}_{r,t}^\flat, \mu_{r,t}^\flat)$ such that 
 \begin{eqnarray}\label{Thursday} T^\flat_{I}T^\flat_{r,s,t} = T^\flat_{I_s}\times T^\flat_{{}_sI}
 \end{eqnarray}
 for all $I=I_s\cup{}_sI\in \mathcal{K}_{r,s,t}$.
 
 It remains to show that the family $\{T_{r,s,t}^\flat\}$ thus constructed is associative. Let then $r<s<t<u$ be some elements of $\mathbb{S}$. Identifying, as above, the poset $(\m{K}_{r,s,t,u}, \subseteq )$ of all partitions $I\in \m{K}_{r,u}$ such that $s,\,t\in I$ with the product poset $\m{K}_{r,s}\times \m{K}_{s,t}\times \m{K}_{t,u}$ via $I=I_{s}\cup {}_sI_t\cup {}_tI\mapsto I_{s}\times {}_sI_t\times {}_tI$, we have
 
 \begin{eqnarray}\label{sf}
         T^\flat_{I}T^\flat_{r,s,u}(\operatorname{id_{\Omega_{r,s}^\flat}}\times T^\flat_{s,t,u}) 
          &=& (T^\flat_{I_s}\times T^\flat_{{}_sI_t\cup {}_tI})(\operatorname{id_{\Omega_{r,s}^\flat}}\times T^\flat_{s,t,u}) \\\nonumber&=&T^\flat_{I_s}\times T^\flat_{{}_sI_t}\times T^\flat_{I_t}.
 \end{eqnarray}
Similarly, $T_{I}^\flat T^\flat_{r,t,u}( T^\flat_{r,s,t}\times\operatorname{id}_{\Omega_{t,u}^\flat}) = T^\flat_{I_s}\times T^\flat_{{}_sI_t}\times T^\flat_{I_t}$, and the conclusion follows.
\end{proof}

\begin{definition} \label{Pprost}The system $\cali{S}^\flat= \{\Omega_{s,t}^\flat, \m{F}_{s,t}^\flat, \mu_{s,t}^\flat, T_{r,s,t}^\flat\}_\mathbb{S}$ will be referred to as the projective CPPS associated with $\cali{S}$.
\end{definition}

If $T^\flat_{s,t}:(\Omega^\flat_{s,t}, \m{F}_{s,t}^\flat, \mu_{s,t}^\flat)\to (\Omega_{s,t}, \m{F}_{s,t}, \mu_{s,t})$ is the coordinate projection corresponding to the trivial partition $I=\{s,t\}$, then the resulting family $\tau_\cali{S}=\{T^\flat_{s,t}\}_{s<t}$ is a morphism of convolution systems from $\cali{S}^\flat$ to $\cali{S}$. The following result follows immediately from Theorem \ref{projequiv} and from the properties of the projective limit.

\begin{corollary} If $\theta:\cali{S}_1\to\cali{S}_2$ is an isomorphism of convergent convolution systems, then there exists an isomorphism of CPPSs $\theta^\flat:\cali{S}_1^\flat\to\cali{S}_2^\flat$ such that $\theta\tau_{\cali{S}_1}=\tau_{\cali{S}_2}\theta^\flat$.
\end{corollary}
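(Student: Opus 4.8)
The plan is to construct $\theta^\flat$ levelwise from the projective limit structure and then verify the two required properties. First I would recall that for each $s<t$, an isomorphism $\theta_{s,t}:(\Omega_{s,t},\m{F}_{s,t},\mu_{s,t})\to(\Omega_{s,t}',\m{F}_{s,t}',\mu_{s,t}')$ is given; for each partition $I=\{s=\iota_0<\cdots<\iota_{m+1}=t\}\in\m{K}_{s,t}$ this induces a product isomorphism $\theta_I=\theta_{\iota_0,\iota_1}\times\cdots\times\theta_{\iota_m,\iota_{m+1}}:(\Omega_I,\m{F}_I,\mu_I)\to(\Omega_I',\m{F}_I',\mu_I')$. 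The commuting square \eqref{army} defining a morphism of convolution systems, applied iteratively along the construction of $T_{I,J}$ in cases (i)--(iii), shows that $\theta_I T_{I,J}=T_{I,J}'\theta_J$ for all $I\subseteq J$ in $\m{K}_{s,t}$; in other words $\{\theta_I\}$ is an isomorphism of the projective systems $\{(\Omega_I,\m{F}_I,\mu_I),T_{I,J}\}$ and $\{(\Omega_I',\m{F}_I',\mu_I'),T_{I,J}'\}$ over $(\m{K}_{s,t},\subseteq)$.

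Next I would invoke the universal property of the projective limit (as recalled in the subsection on projective systems: the limit is the least upper bound of the system). Since $\cali{S}_1$ and $\cali{S}_2$ are both convergent, both projective limits $(\Omega_{s,t}^\flat,\m{F}_{s,t}^\flat,\mu_{s,t}^\flat)$ and $(\Omega_{s,t}'^\flat,\m{F}_{s,t}'^\flat,\mu_{s,t}'^\flat)$ exist. The family $\{\theta_I^{-1} T_I'^\flat\}$ (equivalently, post-composing the coordinate projections of the second limit by the $\theta_I$) realizes $(\Omega_{s,t}'^\flat,\m{F}_{s,t}'^\flat,\mu_{s,t}'^\flat)$ as an upper bound of the first projective system, and symmetrically; by the least-upper-bound characterization this produces a unique isomorphism $\theta_{s,t}^\flat:(\Omega_{s,t}^\flat,\m{F}_{s,t}^\flat,\mu_{s,t}^\flat)\to(\Omega_{s,t}'^\flat,\m{F}_{s,t}'^\flat,\mu_{s,t}'^\flat)$ satisfying the coherence relations
\begin{equation*}
T_I'^\flat\,\theta_{s,t}^\flat=\theta_I\,T_I^\flat\qquad\text{for all }I\in\m{K}_{s,t}.
\end{equation*}
Taking $I=\{s,t\}$ the trivial partition gives exactly $T_{s,t}'^\flat\,\theta_{s,t}^\flat=\theta_{s,t}\,T_{s,t}^\flat$, i.e. the stated intertwining $\theta\,\tau_{\cali{S}_1}=\tau_{\cali{S}_2}\,\theta^\flat$ at the level $(s,t)$.

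It then remains to check that $\theta^\flat=\{\theta_{s,t}^\flat\}_{s<t}$ is a morphism of convolution systems, i.e. that it makes the square \eqref{army} commute for $T_{r,s,t}^\flat$ and $T_{r,s,t}'^\flat$. For this I would use the explicit description \eqref{Thursday} of $T_{r,s,t}^\flat$ from the proof of Theorem \ref{projequiv}: identifying $(\m{K}_{r,s,t},\subseteq)$ with $\m{K}_{r,s}\times\m{K}_{s,t}$ via $I=I_s\cup{}_sI$, one has $T_I^\flat T_{r,s,t}^\flat=T_{I_s}^\flat\times T_{{}_sI}^\flat$ and likewise for $\cali{S}_2$. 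Composing the coherence relations for $\theta_{r,t}^\flat$, $\theta_{r,s}^\flat$, $\theta_{s,t}^\flat$ with these identities and using that $\theta_I=\theta_{I_s}\times\theta_{{}_sI}$ (which follows from the product decomposition $\Omega_I=\Omega_{I_s}\times\Omega_{{}_sI}$), one gets that $T_I'^\flat\bigl(\theta_{r,t}^\flat T_{r,s,t}^\flat\bigr)$ and $T_I'^\flat\bigl(T_{r,s,t}'^\flat(\theta_{r,s}^\flat\times\theta_{s,t}^\flat)\bigr)$ agree for every $I\in\m{K}_{r,s,t}$; since the $T_I'^\flat$ jointly separate points of the limit (their preimages generate $\m{F}_{r,t}'^\flat$ and, the system being simply maximal, determine elements a.e.), the two maps coincide, giving \eqref{army}. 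Finally, each $\theta_{s,t}^\flat$ is an isomorphism (being the limit of an isomorphism of projective systems), so $\theta^\flat$ is an isomorphism of CPPSs.

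I expect the main obstacle to be purely bookkeeping rather than conceptual: namely, pinning down carefully that the levelwise maps $\theta_I$ commute with the structure maps $T_{I,J}$ through the iterative, case-split definition of $T_{I,J}$ in cases (i)--(iii), and that the product-decomposition identities $\theta_I=\theta_{I_s}\times\theta_{{}_sI}$ and $T_{I,J}=T_{I_s,J_s}\times T_{{}_sI,{}_sJ}$ are respected under the $\m{K}_{r,s,t}\cong\m{K}_{r,s}\times\m{K}_{s,t}$ identification. Once these compatibilities are in hand, the existence and uniqueness of $\theta^\flat$ and all of its properties are formal consequences of the least-upper-bound universal property of the projective limit, exactly as in the proof of Theorem \ref{projequiv}.
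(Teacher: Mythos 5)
Your proposal is correct and follows essentially the same route as the paper, which simply asserts that the corollary ``follows immediately from Theorem \ref{projequiv} and from the properties of the projective limit''; your write-up is the natural expansion of that remark, constructing $\theta^\flat$ levelwise from the least-upper-bound (universal) property and verifying the intertwining and the CPPS-morphism condition via \eqref{Thursday}.
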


We conclude this section by noticing that the product system of Hilbert spaces $L^2(\cali{S}^\flat)$ of the projective CPPS $\cali{S}^\flat$ associated with a convergent convolution system $\cali{S}=\{\Omega_{s,t}, \m{F}_{s,t}, \mu_{s,t}, T_{r,s,t}\}_\mathbb{S}$ can be described in terms of the subproduct system of Hilbert spaces  $L^2(\cali{S})$ through an inductive limit construction. Concretely, for each $s<t$ in $\mathbb{S}$, let 
$$H_{s,t}=\limind_{I\in\m{K}_{s,t}}L^2(\mu_I)$$ be the inductive limit of the inductive system of Hilbert spaces $(L^2(\mu_I), U_{T_{I, J}})$ over $(\m{K}_{s,t}, \subseteq)$ with connecting isometries $V_I:L^2(\mu_I)\to H_{s,t}$, $I\in \m{K}_{s,t}$ satisfying the compatibility relation $V_JU_{T_{I,J}}=V_I$ for all $I\subseteq J$ in $\m{K}_{s,t}$, where \(U_{T_{I,J}}\) denotes the Koopman isometry induced by the transformation $T_{I,J}$. (For the classical construction of inductive limits of Hilbert spaces and further properties, see \cite[11.5.26]{KR}.) Arguing as in Theorem \ref{projequiv}, one can find a unique unitary operator $U_{r,s,t}:H_{r,t}\rightarrow H_{r,s}\otimes H_{s,t}$ that satisfies the identity $U_{r,s,t} V_{I}=V_{I_s}\otimes V_{{}_sI}$, 
for all $I=I_s\cup {}_sI\in\m{K}_{r,s,t}$. The system $$\cali{H}=\{H_{s,t}, U_{r,s,t}\}_\mathbb{S}$$ is a product system of Hilbert spaces. We then have:
\begin{theorem}\label{PS}If $\cali{S}=\{\Omega_{s,t}, \m{F}_{s,t}, \mu_{s,t}, T_{r,s,t}\}_\mathbb{S}$ is a convergent convolution system, then 
the product systems of Hilbert spaces $\cali{H}=\{H_{s,t}, U_{r,s,t}\}_\mathbb{S}$ and $L^2(\cali{S}^\flat)$ are isomorphic. 
\end{theorem}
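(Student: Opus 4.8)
The plan is to construct an explicit isomorphism of product systems $\Phi = \{\Phi_{s,t}\}_{s<t}$ from $\cali{H} = \{H_{s,t}, U_{r,s,t}\}_{\mathbb{S}}$ to $L^2(\cali{S}^\flat) = \{L^2(\mu_{s,t}^\flat), U_{T^\flat_{r,s,t}}\}_{\mathbb{S}}$, by matching each side with the universal property of a limit. The starting observation is that for fixed $s<t$, the family of Koopman isometries $U_{T^\flat_I} : L^2(\mu_I) \to L^2(\mu_{s,t}^\flat)$, indexed by $I \in \m{K}_{s,t}$, forms a compatible cocone over the inductive system $(L^2(\mu_I), U_{T_{I,J}})$: indeed $T^\flat_I = T_{I,J} T^\flat_J$ gives $U_{T^\flat_J} U_{T_{I,J}} = U_{T^\flat_I}$. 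By the universal property of the inductive limit $H_{s,t} = \limind_{I} L^2(\mu_I)$, there is a unique isometry $\Phi_{s,t} : H_{s,t} \to L^2(\mu_{s,t}^\flat)$ with $\Phi_{s,t} V_I = U_{T^\flat_I}$ for all $I \in \m{K}_{s,t}$.

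Next I would check that each $\Phi_{s,t}$ is unitary, i.e.\ surjective. Since the $T^\flat_I$ are the coordinate projections of a convergent projective limit, each is an epimorphism of probability spaces, so each $U_{T^\flat_I}$ is an isometry with range $U_{T^\flat_I}(L^2(\mu_I)) = L^2((T^\flat_I)^{-1}(\m{F}_I)) \subseteq L^2(\mu_{s,t}^\flat)$. Because $\m{F}_{s,t}^\flat = \bigvee_{I} (T^\flat_I)^{-1}(\m{F}_I)$ and $\bigcup_I (T^\flat_I)^{-1}(\m{F}_I)$ is a field (directedness of $\m{K}_{s,t}$ makes the union increasing in the appropriate sense), the union of the ranges $\bigcup_I U_{T^\flat_I}(L^2(\mu_I))$ is dense in $L^2(\mu_{s,t}^\flat)$ — this is a standard martingale/density argument. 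Hence the range of $\Phi_{s,t}$, which contains this union, is all of $L^2(\mu_{s,t}^\flat)$, so $\Phi_{s,t}$ is unitary.

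Finally I would verify compatibility with the product structures, i.e.\ that for $r<s<t$,
\begin{eqnarray*}
U_{T^\flat_{r,s,t}}\,\Phi_{r,t} = (\Phi_{r,s}\otimes \Phi_{s,t})\,U_{r,s,t}.
\end{eqnarray*}
It suffices to check this after precomposing with $V_I$ for $I = I_s \cup {}_sI \in \m{K}_{r,s,t}$, since $\{V_I\}$ has dense total range and $\m{K}_{r,s,t}$ is cofinal in $(\m{K}_{r,t},\subseteq)$. On the left, $\Phi_{r,t} V_I = U_{T^\flat_I}$, so the left side becomes $U_{T^\flat_I} U_{T^\flat_{r,s,t}} = U_{T^\flat_{r,s,t} \circ T^\flat_I}$, wait — more precisely, $U_{T^\flat_{r,s,t}} U_{T^\flat_I} = U_{T^\flat_I \circ T^\flat_{r,s,t}}$, and by the defining identity (\ref{Thursday}), $T^\flat_I T^\flat_{r,s,t} = T^\flat_{I_s} \times T^\flat_{{}_sI}$, giving $U_{T^\flat_{I_s}\times T^\flat_{{}_sI}} = U_{T^\flat_{I_s}} \otimes U_{T^\flat_{{}_sI}}$ under the canonical identification $L^2(\mu_{I_s}\times \mu_{{}_sI}) = L^2(\mu_{I_s})\otimes L^2(\mu_{{}_sI})$. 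On the right, $U_{r,s,t} V_I = V_{I_s}\otimes V_{{}_sI}$ by the defining property of $U_{r,s,t}$ recalled just before the theorem, and then $(\Phi_{r,s}\otimes\Phi_{s,t})(V_{I_s}\otimes V_{{}_sI}) = U_{T^\flat_{I_s}}\otimes U_{T^\flat_{{}_sI}}$. The two sides agree, which proves the intertwining relation.

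The main obstacle I expect is the surjectivity step: one must argue carefully that the increasing union of the ranges $U_{T^\flat_I}(L^2(\mu_I))$ is dense in $L^2(\mu_{s,t}^\flat)$. This is exactly where convergence of the convolution system is used (the projective limit exists and $\m{F}_{s,t}^\flat$ is generated by the cylinder $\sigma$-fields), and it requires the standard fact that if a field generates a $\sigma$-field then simple functions measurable with respect to that field are dense in $L^2$. Everything else is bookkeeping with universal properties and the canonical identification of an $L^2$-space of a product measure with a Hilbert space tensor product.
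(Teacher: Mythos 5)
Your proposal is correct and follows essentially the same route as the paper: construct $\theta_{s,t}$ (your $\Phi_{s,t}$) from the universal property of the inductive limit using the compatible cocone $U_{T^\flat_I}$, obtain unitarity from the density of $\bigcup_I U_{T^\flat_I}(L^2(\mu_I))$ in $L^2(\mu_{s,t}^\flat)$, and verify the intertwining relation on the dense ranges of the $V_I$ via the identity $T^\flat_I T^\flat_{r,s,t}=T^\flat_{I_s}\times T^\flat_{{}_sI}$. The only difference is that you spell out the martingale/density argument for surjectivity, which the paper simply asserts.
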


\begin{proof}
Because $U_{T_J^\flat}U_{T_{I, J}}=U_{T_I^\flat}$, for all $I\subseteq J$, $I, J\in\m{K}_{s,t}$, and the set $\bigcup _{I\in \m{K}_{s,t}}U_{T_I^\flat}(L^2(\mu_I))$ is everywhere dense in $L^2(\mu_{s,t}^\flat)$, it follows from the universal property of the inductive limit that there exists a unique unitary operator $\theta_{s,t}:H_{s,t}\to L^2(\mu_{s,t}^\flat)$ such that $\theta_{s,t}V_I=U_{T^\flat_I}$, for all $I\in\m{K}_{s,t}$. Moreover, since \begin{eqnarray*}(\theta_{r,s}\otimes \theta_{s,t})U_{r,s,t}V_I&=&\theta_{r,s}V_{I_s}\otimes \theta_{s,t}V_{{}_sI}=U_{T^\flat_{I_s}}\otimes U_{T^\flat_{{}_sI}}\\&=&U_{T^\flat_{r,s,t}}U_{T^\flat_I}=U_{T^\flat_{r,s,t}}\theta_{r,t}V_I,\end{eqnarray*} for all $I=I_s\cup{}_sI\in\m{K}_{r,s,t}$, and $r<s<t$ in $\mathbb{S}$, we deduce that $(\theta_{r,s}\otimes \theta_{s,t})U_{r,s,t}=U_{T^\flat_{r,s,t}}\theta_{r,t}$, i.e., $\{\theta_{s,t}\}$ is an isomorphism of product systems.\end{proof}

 \section{$K$-convergent convolution systems and associated flow systems} 

Let $\cali{S}=\{\Omega_{s,t}, \m{F}_{s,t}, \mu_{s,t}, T_{r,s,t}\}_\mathbb{S}$ be a convolution system. Consider two partitions $I,\,J\in \m{K}$, $I\subseteq J$, and write $J={I}_{L}\cup \pe{I}\cup {I}_R$ as in (\ref{abba}). We set \begin{eqnarray*}\label{rocks}X_{I,J}= \begin{cases}T_{I,J}& \text{if } I,J\in \m{K}_{s,t},\,\text{for some}\, s<t\,\text{in}\;\mathbb{S}\\
(T_{I,\pe{I}})\comp (\pi_{\pe{I},J})& \text{otherwise} \end{cases}\end{eqnarray*} where $\pi_{\pe{I},J}:\Omega_J\to \Omega_{\pe{I}}$ is the coordinate projection. Then  $X_{I,J}:(\Omega_J, \m{F}_J, \mu_J)\to (\Omega_I, \m{F}_I, \mu_I)$ is a transformation of probability spaces. Moreover:

\begin{proposition}\label{projsysPst2}
The system $\{(\Omega_I, \m{F}_I, \mu_I), X_{I,J}\}$ is a projective system of probability spaces over the directed set $(\mathcal{K},\subseteq)$.
\end{proposition}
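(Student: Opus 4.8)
The plan is to verify the two defining properties of a projective system over $(\m{K}, \subseteq)$: first that each $X_{I,J}$ is a morphism of probability spaces (already granted in the text), and second the compatibility relation $X_{I,K} = X_{I,J} X_{J,K}$ for all $I \subseteq J \subseteq K$ in $\m{K}$, together with $X_{I,I} = \op{id}_{\Omega_I}$ (immediate from the definition, since $\pe{I}=I$ and the trivial partition case gives $T_{I,I}=\op{id}$). So the real content is the cocycle identity, and the natural approach is to split into cases according to whether the partitions lie in a common $\m{K}_{s,t}$ or not.

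\textbf{Case 1.} If $I, J, K$ all belong to some $\m{K}_{s,t}$, then $X_{I,K}=T_{I,K}$, $X_{I,J}=T_{I,J}$, $X_{J,K}=T_{J,K}$, and the identity is exactly Proposition \ref{projsysPst}. \textbf{Case 2.} If $I \in \m{K}_{s,t}$ but $K$ (hence possibly $J$) is not contained in $[s,t]$, write $K = K_L \cup \pe{K} \cup K_R$ as in (\ref{abba}) relative to the endpoints of $I$; similarly $J = J_L \cup \pe{J} \cup J_R$. The key structural point is that the decomposition (\ref{abba}) is functorial with respect to refinement: since $I \subseteq J \subseteq K$ and $I \in \m{K}_{s,t}$, the left part $K_L$ refines $J_L$, the right part $K_R$ refines $J_R$, and $\pe{K} \in \m{K}_{s,t}$ refines $\pe{J} \in \m{K}_{s,t}$. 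The coordinate projection $\pi_{\pe{K},K}: \Omega_K \to \Omega_{\pe{K}}$ factors through $\pi_{\pe{J},J}$ in the sense that $\pi_{\pe{J},J} \circ (\pi_{J_L,K_L} \times \pi_{\pe{J},\pe{K}}^{\,*} \times \pi_{J_R,K_R})$-type compositions collapse correctly; more precisely, since $\pe{J} \subseteq \pe{K}$ and $\pe{J},\pe{K}\in\m{K}_{s,t}$, Proposition \ref{projsysPst} gives $T_{I,\pe{J}} T_{\pe{J},\pe{K}} = T_{I,\pe{K}}$. Combining this with the elementary observation that projecting $\Omega_K$ onto $\Omega_{\pe{J}}$ can be done either directly by $\pi_{\pe{J},J}\circ(\text{the }J\text{-to-}K\text{ projection collapsed appropriately})$ or in two steps through $\Omega_{\pe{K}}$, one gets
\begin{eqnarray*}
X_{I,J} X_{J,K} &=& T_{I,\pe{J}} \circ \pi_{\pe{J},J} \circ X_{J,K} \\
&=& T_{I,\pe{J}} \circ T_{\pe{J},\pe{K}} \circ \pi_{\pe{K},K} \;=\; T_{I,\pe{K}}\circ \pi_{\pe{K},K} \;=\; X_{I,K}.
\end{eqnarray*}
Here one uses that $\pi_{\pe{J},J}\circ X_{J,K}$, when $J\notin\m{K}_{s',t'}$, equals $T_{\pe{J},\pe{K}}\circ\pi_{\pe{K},K}$ because the $J_L$ and $J_R$ coordinates are discarded by $\pi_{\pe{J},J}$ anyway, while on the middle block $\pe{J}$ the map $X_{J,K}$ restricts to $T_{\pe{J},\pe{K}}$ (or to a projection followed by $T$, which is consistent since $\pe{J},\pe{K}$ are honest partitions of $[s,t]$). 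The case where $J$ itself lies in a $\m{K}_{s',t'}$ containing $[s,t]$ but $K$ does not is handled the same way, noting $\pe{J}=J$ restricted to $[s,t]$ and that $T_{I,J}$ factors as $T_{I,\pe{J}}$ composed with a projection.

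\textbf{Main obstacle.} The bookkeeping in Case 2 — tracking how the left/middle/right decomposition (\ref{abba}) interacts with successive refinements and ensuring the coordinate projections compose compatibly with the multiplication maps $T$ — is the only delicate part; everything else reduces to Proposition \ref{projsysPst}. I would isolate, as a preliminary lemma, the statement that for $I \subseteq J \subseteq K$ with $I\in\m{K}_{s,t}$, the decompositions satisfy $J_L\subseteq K_L$, $J_R\subseteq K_R$, $\pe{J}\subseteq\pe{K}$, and $\pi_{\pe{J},J}\circ X_{J,K} = T_{\pe{J},\pe{K}}\circ\pi_{\pe{K},K}$ (interpreting $T_{\pe{J},\pe{K}}$ via case (i)--(ii) of the construction); granting this, the displayed computation closes the argument, and $X_{I,I}=\op{id}$ is immediate. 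This makes the proof essentially a reduction to the already-established projectivity on each $\m{K}_{s,t}$.
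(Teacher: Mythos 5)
Your proposal is correct and takes essentially the same route as the paper: both arguments reduce the cocycle identity to Proposition \ref{projsysPst} on the common interval $[s,t]$ via the key interchange identity $\pi_{\pe{J},J}\comp X_{J,K}=T_{\pe{J},\pe{K}}\comp\pi_{\pe{K},K}$ (the paper states it as $\pi_{\pe{I},J}\comp T_{J,\pe{J}}=T_{\pe{I},\pe{I'}}\comp\pi_{\pe{I'},\pe{J}}$ in its own notation, which is the same fact since the blockwise maps $T_{J,\pe{J}}$ commute with discarding the outer coordinates). The differences are purely notational bookkeeping in how the intermediate middle block is labelled.
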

\begin{proof}The compatibility relation $X_{I,J}X_{J,K}=X_{I,K}$, for all $I,\,J,\,K \in\mathcal{K}$ satisfying $I\subseteq J\subseteq K$, is verified using the same approach as in Proposition \ref{projsysPst}. For this, suppose $I\in \mathcal{K}_{q,r}$, $J\in \mathcal{K}_{s,t}$, and $K\in \mathcal{K}_{u,v}$. 
where $u<s<q<r<t<v$. Following (\ref{abba}), we decompose $J$ with respect to $I$ as $J={I}_{L}\cup \pe{I}\cup {I}_R$, $K$ with respect to $J$ as  $K={J}_{L}\cup \pe{J}\cup {J}_R$, and $K$ with respect to $I$ as $K={I}_{L}'\cup \pe{I'}\cup {I}_R'.$ Note that the partitions $I,\pe{I}, \pe{I'}\in \m{K}_{q,r}$ satisfy $I\subseteq \pe{I}\subseteq \pe{I'}$, and the partitions $J,\pe{J}\in \m{K}_{s,t}$ satisfy $J\subseteq \pe{J}$. It then follows that $\pe{I'}\subseteq \pe{J},$ and decomposition (\ref{abba}) of $\pe{J}$ with respect to $\pe{I'}$ is given by $\pe{J}=(\pe{J}\cap [u,s])\cup \pe{I'}\cup(\pe{J}\cap [t,v])$. Using this, we have
\begin{eqnarray*}\pi_{\pe{I},J}\comp (T_{J,\pe{J}})&=&\pi_{\pe{I},J}\comp (T_{J\cap [u,s],\pe{J}\cap [u,s]}\times T_{J\cap [s,t],\pe{J}\cap [s,t]}\times T_{J\cap [t,v],\pe{J}\cap [t,v]})\\&=&\pi_{\pe{I},J}\comp (T_{I_L,\pe{J}\cap [u,s]}\times T_{\pe{I},\pe{I'}}\times T_{I_R,\pe{J}\cap [t,v]})\\&=&(T_{\pe{I}, \pe{I'}})\comp \pi_{\pe{I'},\pe{J}}.\end{eqnarray*}Therefore \begin{eqnarray*}X_{I,J}X_{J,K}&=&(T_{I,\pe{I}})\comp(\pi_{\pe{I}, J})\comp (T_{J, \pe{J}})\comp (\pi_{\pe{J}, K})=(T_{I,\pe{I}})\comp (T_{\pe{I},\pe{I'}})\comp (\pi_{\pe{I'}, \pe{J}})\comp (\pi_{\pe{J}, K})\\&=&X_{I, K},\end{eqnarray*}as needed.
\end{proof}
\begin{definition} A convolution system $\cali{S}=\{\Omega_{s,t}, \m{F}_{s,t}, \mu_{s,t}, T_{r,s,t}\}_\mathbb{S}$ is said to be $K$-convergent if the projective system $\{(\Omega_I, \m{F}_I, \mu_I), X_{I,J}\}$ over  $(\mathcal{K},\subseteq)$ is convergent.\end{definition}  If $\cali{S}=\{\Omega_{s,t}, \m{F}_{s,t}, \mu_{s,t}, T_{r,s,t}\}_\mathbb{S}$ is $K$-convergent, then we denote by $(\Omega^\flat, \m{F}^\flat, P^\flat)$ the projective limit of this projective system, $$(\Omega^\flat, \m{F}^\flat, P^\flat)=\varprojlim_{\mathcal{K}} \{(\Omega_I, \m{F}_I, \mu_I), X_{I,J}\},
$$ and by $X_I:  (\Omega^\flat, \m{F}^\flat, P^\flat)\to (\Omega_I, \m{F}_I, \mu_I)$ the coordinate projections; they satisfy the compatibility relation $X_I=X_{I,J}X_J$, for all $I,\,J\in \m{K}$, $I\subseteq J$. For a trivial partition $I=\{s< t\}$, we simply write $X_{s,t}$ instead of $X_{\{s, t\}}$.

The following proposition connects the main concepts of this paper.
\begin{proposition}\label{Kimp} A convolution system is $K$-convergent if and only if it admits a flow system. Moreover,  
any $K$-convergent convolution system is convergent.
\end{proposition}
\begin{proof}
Let $\cali{S}=\{\Omega_{s,t}, \m{F}_{s,t}, \mu_{s,t}, T_{r,s,t}\}_\mathbb{S}$ be a $K$-convergent convolution system. Then one can readily see that $$((\Omega^\flat, \m{F}^\flat, P^\flat), \{X_{s,t}\})_{\cali{S}}$$
 is a flow system over $\cali{S}$. In addition, the projective system of sets $\{\Omega_I,  T_{I,J}\}$ over $(\mathcal{K}_{s,t},\subseteq)$ is simply maximal, for any two elements  $s<t$  in $\mathbb{S}$. Furthermore, if $\pi_{s,t}:\Omega^\flat \to \Omega_{s,t}^\flat$ denotes the coordinate projection onto $\Omega^\flat$, then $T^\flat_I\pi_{s,t}=X_I$, for all $I\in \m{K}_{s,t}$. Therefore $(\Omega^\flat, \m{F}^\flat, P^\flat)$ is a lower bound of $\{(\Omega_I, \m{F}_I, \mu_I), T_{I,J}\}$, which means that the projective system $\{(\Omega_I, \m{F}_I, \mu_I), T_{I,J}\}$ over  $(\mathcal{K}_{s,t},\subseteq)$ is convergent.
 
 Conversely, suppose $\cali{S}$ admits a flow system $((\Omega^*, \m{F}^*, P^*), \{X_{s,t}^*\})_{\cali{S}}$. It follows that $(\Omega^*, \m{F}^*, P^*)$ is a lower bound for the projective system  $\{(\Omega_I, \m{F}_I, \mu_I), X_{I,J}\}$ over  $(\mathcal{K},\subseteq)$, with transformations $X_I^*: (\Omega^*, \m{F}^*, P^*)\to (\Omega_I, \m{F}_I, \mu_I)$ defined by $X_I^*(\omega)=( X_{\iota_0, \iota_1}^*(\omega), X_{\iota_1,\iota_2}^*(\omega),\cdots, X_{\iota_m,\iota_{m+1}}^*(\omega)),$ for all $\omega\in \Omega^*$ and $I=\{s=\iota_0<\iota_1<\,\dots<\iota_{m+1}=t\}\in\m{K}$. This assures the $K$-convergence of $\cali{S}$.
\end{proof}

\begin{observation}Convolution systems arising from random measures, such as those in Example \ref{exa3}, are always $K$-convergent because they admit flow systems. In contrast, Example \ref{exa1} illustrates both $K$-convergent and non-$K$-convergent systems.

For instance, consider $S=(\mathbb{Z}/m\mathbb{Z},+)$ equipped with the uniform distribution $\mu_m$, as in Observation \ref{obsa1}. The associated convolution system  $\mathcal{S}_{\mu_m}$ over $\mathbb{S}=(\mathbb{R},\leq)$ is $K$-convergent by Choksi's theorem \cite[Theorem 2.2]{Chol}, yet it does not admit an a.e.\ flow system on a standard Borel space; in fact, its measurable space $(\Omega^\flat, \mathcal{F}^\flat)$ is non-standard.

On the other hand, let \(
  S=\prod_{n=1}^{\infty}\mathbb{Z}/m\mathbb{Z},
\)
with coordinate-wise addition modulo $m$. Endow 
$S$ with the box topology, making it non-compact, and let 
$\m{G}$ be the associated Borel $\sigma$-field, which in this case is the entire power set of $S$. Define  $\mu=\bigotimes_{n=1}^\infty\mu_m$,
which is an idempotent probability measure on $(S, \m{G})$. The associated convolution system  $\cali{S}_\mu$ over $\mathbb{S}=(\mathbb{R},\leq)$
is easily seen to be convergent -- all finite-partition marginals glue to a genuine measure -- but it cannot admit a flow system (and hence fails to be 
$K$-convergent), because any putative flow would require uncountably many independent copies of 
$\mu$ (one per disjoint interval), which cannot coexist on a single probability space.

Further ``exotic'' non-$K$-convergent examples can be built adapting to our setting the techniques of \cite{TV,Tsi03,Tsi04}; these give rise to non-Fock Arveson product systems.  We will address these constructions elsewhere.
\end{observation}
Next, we focus on finding a necessary and sufficient condition for a convergent convolution system to be $K$-convergent. For this purpose, we start with a convergent convolution system $\cali{S}=\{\Omega_{s,t}, \m{F}_{s,t}, \mu_{s,t}, T_{r,s,t}\}_\mathbb{S}$, and let $\cali{S}^\flat= \{\Omega_{s,t}^\flat, \m{F}_{s,t}^\flat, \mu_{s,t}^\flat, T_{r,s,t}^\flat\}_\mathbb{S}$ be the projective CPPS associate with $\cali{S}$.  For any elements $u< s< t<v$ in $\mathbb{S}$, let $T^\flat_{(s,t),(u,v)}: \Omega^\flat_{u,v}\to \Omega^\flat_{s,t}$ be the map $$T^\flat_{(s,t),(u,v)}= \pi_{(s,t)} \comp (\operatorname{id}_{\Omega^\flat_{u,s}}\times T^\flat_{s,t,v})^{-1}\comp (T^{\flat}_{u,s,v})^{-1},$$ where $\pi_{(s,t)}: \Omega^\flat_{u,s}\times \Omega^\flat_{s,t} \times \Omega^\flat_{t,v}\to \Omega^\flat_{s,t}$ is the coordinate projection onto $\Omega^\flat_{s,t}$. If $u=s$, and/or $t=v$, then $T^\flat_{(s,t),(u,v)}$ is defined accordingly. It is clear that $T^\flat_{(s,t),(u,v)}: (\Omega^\flat_{u,v}, \m{F}_{u,v}^\flat, \mu_{u,v}^\flat)\to (\Omega^\flat_{s,t}, \m{F}_{s,t}^\flat, \mu_{s,t}^\flat )$ is a surjective transformation of probability spaces. Furthermore, we have: 
 
 \begin{lemma}\label{ll1}For any elements $u< s< t<v$ in $\mathbb{S}$ and partitions $I\in\m{K}_{s,t}$ and $J\in\m{K}_{u,v}$ we have $T^\flat_IT^\flat_{(s,t), (u,v)}=X_{I,J}T^\flat _J$.
 \end{lemma}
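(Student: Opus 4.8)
The goal is an identity of maps $\Omega^\flat_{u,v} \to \Omega_I$, where on the left we first project $\Omega^\flat_{u,v}$ down to $\Omega^\flat_{s,t}$ via $T^\flat_{(s,t),(u,v)}$ and then to $\Omega_I$ via $T^\flat_I$, and on the right we first project $\Omega^\flat_{u,v}$ to $\Omega_J$ via $T^\flat_J$ and then apply the finite-level map $X_{I,J}$. Since both sides are morphisms out of a projective limit whose defining epimorphisms are the $T^\flat_K$, $K\in\m{K}_{u,v}$, the natural strategy is to \emph{enlarge $J$ to a common refinement} that already contains $s$ and $t$, verify the identity there, and then use the compatibility relations $T^\flat_K = T_{K,K'}T^\flat_{K'}$ and $X_{I,J}=X_{I,J'}X_{J',J}$ (for suitable refinements) together with Proposition \ref{projsysPst2} to transport it back. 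Concretely, I would pick $J' \in \m{K}_{u,v}$ with $J\subseteq J'$ and $\{s,t\}\subseteq J'$; writing $J'$ in the form (\ref{abba}) relative to $\{s,t\}$, namely $J' = J'_L \cup \pe{J'} \cup J'_R$ with $\pe{J'}\in\m{K}_{s,t}$, we have by definition $X_{I,J'} = T_{I,\pe{J'}}\comp\pi_{\pe{J'},J'}$, and it suffices to prove $T^\flat_I T^\flat_{(s,t),(u,v)} = X_{I,J'}T^\flat_{J'}$, since $X_{I,J}T^\flat_J = X_{I,J'}X_{J',J}T^\flat_J = X_{I,J'}T^\flat_{J'}$ using the compatibility of the $X$-system from Proposition \ref{projsysPst2} and of the $T^\flat$-system.

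The heart of the argument is then to unwind $T^\flat_{(s,t),(u,v)}$ using (\ref{Thursday}), i.e. the identity $T^\flat_K T^\flat_{r,s,t} = T^\flat_{K_s}\times T^\flat_{{}_sK}$ from the proof of Theorem \ref{projequiv}, applied twice to the two nested isomorphisms $T^\flat_{u,s,v}$ and $\operatorname{id}\times T^\flat_{s,t,v}$ that appear in the definition of $T^\flat_{(s,t),(u,v)}$. The point is that $T^\flat_{(s,t),(u,v)}$ is precisely the ``coordinate extraction'' map that, after identifying $\Omega^\flat_{u,v}$ with the least upper bound $\Omega^\flat_{u,s}\times\Omega^\flat_{s,t}\times\Omega^\flat_{t,v}$ of the projective system over $\m{K}_{u,s,t,v}$, reads off the middle factor. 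So for a partition $J'\in\m{K}_{u,s,t,v}$, written $J' = J'_s \cup {}_sJ'_t \cup {}_tJ'$ relative to $\{s,t\}$ (this is the same decomposition as $J'_L\cup\pe{J'}\cup J'_R$ once one notes $J'_L = J'_s$ up to the shared endpoint $s$, $\pe{J'} = {}_sJ'_t$, and $J'_R = {}_tJ'$, because $J'$ already contains $s$ and $t$), the composite $T^\flat_{{}_sJ'_t}\circ T^\flat_{(s,t),(u,v)}$ equals $T^\flat_{{}_sJ'_t}\circ\pi_{(s,t)}\circ(\cdots)^{-1} = $ the $({}_sJ'_t)$-coordinate projection of $\Omega^\flat_{u,v}$, which is exactly $T^\flat_{J'}$ followed by $\pi_{\pe{J'},J'}$. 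Chasing this through gives $T^\flat_{\pe{J'}}\,T^\flat_{(s,t),(u,v)} = \pi_{\pe{J'},J'}\,T^\flat_{J'}$, and then composing with $T_{I,\pe{J'}}$ (legitimate since $I\subseteq\pe{J'}$ in $\m{K}_{s,t}$, using $T^\flat_I = T_{I,\pe{J'}}T^\flat_{\pe{J'}}$) yields $T^\flat_I T^\flat_{(s,t),(u,v)} = T_{I,\pe{J'}}\pi_{\pe{J'},J'}T^\flat_{J'} = X_{I,J'}T^\flat_{J'}$, as wanted.

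I expect the main obstacle to be purely bookkeeping: carefully matching the three-factor decomposition $J' = J'_L\cup\pe{J'}\cup J'_R$ from (\ref{abba}) with the decomposition $J' = J'_s\cup{}_sJ'_t\cup{}_tJ'$ used in (\ref{sf}) and in the associativity part of Theorem \ref{projequiv}, and keeping the boundary cases $u=s$ and/or $t=v$ (where $T^\flat_{(s,t),(u,v)}$ is defined with fewer factors) straight. One also has to make sure the inverse isomorphisms $(T^\flat_{u,s,v})^{-1}$ and $(\operatorname{id}\times T^\flat_{s,t,v})^{-1}$ are handled correctly — this is where the fact from Theorem \ref{projequiv} that these are genuine \emph{isomorphisms} of probability spaces (not merely morphisms) is essential, so that ``extract the middle coordinate'' is well-defined. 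No genuinely new idea beyond Theorem \ref{projequiv} and Proposition \ref{projsysPst2} seems to be needed; the lemma is essentially the assertion that $T^\flat_{(s,t),(u,v)}$ is compatible with the finite-level projections $X_{I,J}$, and everything reduces to the compatibility relation (\ref{Thursday}).
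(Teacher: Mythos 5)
Your proposal is correct and follows essentially the same route as the paper: decompose $J$ relative to $\{s,t\}$ as in (\ref{abba}), use the compatibility $T^\flat_I = T_{I,\pe{I}}T^\flat_{\pe{I}}$ together with identity (\ref{sf}) (the twofold application of (\ref{Thursday})) to identify the middle-coordinate projection of $(\operatorname{id}_{\Omega^\flat_{u,s}}\times T^\flat_{s,t,v})^{-1}(T^\flat_{u,s,v})^{-1}$ with $\pi_{\pe{I},J}T^\flat_J$, and recognize $T_{I,\pe{I}}\comp\pi_{\pe{I},J}$ as $X_{I,J}$. The preliminary enlargement of $J$ to $J'$ is redundant, since $I\subseteq J$ already forces $s,t\in J$ (and the subscripts in $X_{J',J}$ should read $X_{J,J'}$), but this does not affect the validity of the argument.
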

\begin{proof}By decomposing $J=I_L\cup \pe{I}\cup I_R$ as in (\ref{abba}) , we have
\begin{eqnarray*}
T^\flat_IT^\flat_{(s,t), (u,v)}&=&T^\flat_I\,\pi_{(s,t)} (\operatorname{id}_{\Omega^\flat_{u,s}}\times T^\flat_{s,t,v})^{-1}(T^{\flat}_{u,s,v})^{-1}\\&=&T_{I, \pe{I}}T^\flat_{\pe{I}}\pi_{(s,t)} (\operatorname{id}_{\Omega^\flat_{u,s}}\times T^\flat_{s,t,v})^{-1}(T^{\flat}_{u,s,v})^{-1}\\
&=&T_{I, \pe{I}}\pi_{\pe{I},J}(T^\flat_{I_R}\times T^\flat_{\pe{I}}\times T^\flat_{I_L})(\operatorname{id}_{\Omega^\flat_{u,s}}\times T^\flat_{s,t,v})^{-1}(T^{\flat}_{u,s,v})^{-1}\\
&=&X_{I, J}(T^\flat_{I_R}\times T^\flat_{\pe{I}}\times T^\flat_{I_L})(\operatorname{id}_{\Omega^\flat_{u,s}}\times T^\flat_{s,t,v})^{-1}(T^{\flat}_{u,s,v})^{-1}\\
&\stackrel{(\ref{sf})}{=}&X_{I,J}T^\flat _J,
\end{eqnarray*} as needed.
\end{proof} 
 
\begin{proposition}\label{projint} Let  $\cali{S}=\{\Omega_{s,t}, \m{F}_{s,t}, \mu_{s,t}, T_{r,s,t}\}_\mathbb{S}$ be a convergent convolution system. The system $(\{(\Omega_{s,t}^\flat, \m{F}_{s,t}^\flat, \mu_{s,t}^\flat)\}, T_{(s,t),(u,v)})$ is a projective system of probability spaces over the partially ordered set of ordered pairs $\m{C}=\{(s,t)\,|\,s,\,t\in \mathbb{S},\,s\leq t\}$ with partial order $(s,t)\subseteq (u,v)$ if $u\leq s\leq t\leq v$. 
\end{proposition}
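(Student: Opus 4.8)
The plan is to verify the two axioms defining a projective system for the family $\{T^\flat_{(s,t),(u,v)}\}_{\m{C}}$; that each $T^\flat_{(s,t),(u,v)}$ is an epimorphism of probability spaces has already been recorded, and the degenerate case $s=t$, where $\Omega^\flat_{s,s}$ is a one-point space, is trivial throughout. The identity axiom $T^\flat_{(s,t),(s,t)}=\operatorname{id}_{\Omega^\flat_{s,t}}$ is immediate from the defining formula of $T^\flat_{(s,t),(u,v)}$ in the degenerate case $u=s$, $v=t$ (all the $T^\flat_{\bullet,\bullet,\bullet}$-factors collapse to identities and only the projection $\pi_{(s,t)}$ remains). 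The content is therefore the cocycle relation
\[
T^\flat_{(s,t),(u,v)}\,T^\flat_{(u,v),(w,z)}=T^\flat_{(s,t),(w,z)}
\]
whenever $(s,t)\subseteq(u,v)\subseteq(w,z)$ in $\m{C}$, i.e. $w\le u\le s\le t\le v\le z$. The key tools are Lemma \ref{ll1} and the uniqueness part of the universal property of $(\Omega^\flat_{s,t}, \m{F}_{s,t}^\flat, \mu_{s,t}^\flat)=\varprojlim_{\m{K}_{s,t}}\{(\Omega_I,\m{F}_I,\mu_I),T_{I,J}\}$: since this space is the least upper bound of the system $\{(\Omega_I,\m{F}_I,\mu_I),T_{I,J}\}$, a morphism into it is uniquely determined by its compositions with the coordinate projections $T^\flat_I$, $I\in\m{K}_{s,t}$ — the same mechanism already used in the proofs of Theorem \ref{projequiv} and Theorem \ref{PS}.

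So fix $(s,t)\subseteq(u,v)\subseteq(w,z)$ and an arbitrary $I\in\m{K}_{s,t}$, and set $J=I\cup\{u,v\}\in\m{K}_{u,v}$ and $K=J\cup\{w,z\}\in\m{K}_{w,z}$, so that $I\subseteq J\subseteq K$ in $\m{K}$. Applying Lemma \ref{ll1} first to the pair $I\in\m{K}_{s,t}$, $J\in\m{K}_{u,v}$ and then to $J\in\m{K}_{u,v}$, $K\in\m{K}_{w,z}$, together with the relation $X_{I,J}X_{J,K}=X_{I,K}$ from Proposition \ref{projsysPst2}, we obtain
\begin{eqnarray*}
T^\flat_I\,T^\flat_{(s,t),(u,v)}\,T^\flat_{(u,v),(w,z)}
&=&X_{I,J}\,T^\flat_J\,T^\flat_{(u,v),(w,z)}=X_{I,J}\,X_{J,K}\,T^\flat_K\\
&=&X_{I,K}\,T^\flat_K=T^\flat_I\,T^\flat_{(s,t),(w,z)},
\end{eqnarray*}
the last equality being a third application of Lemma \ref{ll1}, to $I\in\m{K}_{s,t}$, $K\in\m{K}_{w,z}$. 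As $I$ was arbitrary, the uniqueness recalled above yields the cocycle relation. (The poset $\m{C}$ is moreover directed, since any two pairs $(s_1,t_1),(s_2,t_2)$ are dominated by $(\min\{s_1,s_2\},\max\{t_1,t_2\})\in\m{C}$, $\mathbb{S}$ being linearly ordered.)

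The remaining points are routine bookkeeping. One must handle the degenerate endpoint cases $u=s$, $v=t$ (or $w=u$, $v=z$), reading the maps $T^\flat_{(\cdot),(\cdot)}$ and Lemma \ref{ll1} in their ``defined accordingly'' forms; the displayed computation then survives verbatim, with some of the factors $T^\flat_{I_L}, T^\flat_{I_R}$ simply absent. And one must accept testing equality of morphisms into $(\Omega^\flat_{s,t}, \m{F}_{s,t}^\flat, \mu_{s,t}^\flat)$ coordinatewise, which is precisely the uniqueness half of the universal property of the projective limit over the directed poset $(\m{K}_{s,t},\subseteq)$. Beyond this I do not anticipate any essential obstacle.
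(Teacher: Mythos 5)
Your proof is correct, but it takes a genuinely different route from the paper's. The paper establishes the cocycle relation $T^\flat_{(q,r),(s,t)}T^\flat_{(s,t),(u,v)}=T^\flat_{(q,r),(u,v)}$ by directly unwinding the defining formula $T^\flat_{(s,t),(u,v)}=\pi_{(s,t)}\comp(\operatorname{id}\times T^\flat_{s,t,v})^{-1}\comp(T^\flat_{u,s,v})^{-1}$ and repeatedly invoking the associativity of the maps $T^\flat_{r,s,t}$ --- a computation the authors themselves describe as ``lengthy and convoluted'' and only sketch. You instead test both sides against the coordinate projections $T^\flat_I$, $I\in\m{K}_{s,t}$, and reduce the whole identity to three applications of Lemma \ref{ll1} together with the already-proved relation $X_{I,J}X_{J,K}=X_{I,K}$ of Proposition \ref{projsysPst2}; your choice $J=I\cup\{u,v\}$, $K=J\cup\{w,z\}$ correctly produces the needed chain $I\subseteq J\subseteq K$ with $J\in\m{K}_{u,v}$, $K\in\m{K}_{w,z}$, and there is no circularity since Lemma \ref{ll1} precedes the proposition and depends only on Theorem \ref{projequiv}. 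This is essentially the mechanism the paper itself deploys later in Theorem \ref{Kimpa}, so your argument is fully in the spirit of the text while being shorter and, unlike the paper's sketch, complete. The one hypothesis you use beyond the paper's explicit statements is that the family $\{T^\flat_I\}$ separates morphisms into $\Omega^\flat_{s,t}$; this is legitimate because the projective limit is a set of threads and $\m{F}^\flat_{s,t}=\bigvee_I (T^\flat_I)^{-1}(\m{F}_I)$, though with an uncountable index set and everything taken modulo null sets one should say (as you implicitly do) that equality is being verified at the level of the induced morphisms, i.e.\ on the generating field, rather than pointwise. Your remarks on the identity axiom, the degenerate pairs $(s,s)$, and directedness of $\m{C}$ are all fine.
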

\begin{proof} The compatibility relation $T^\flat_{(q,r),(s,t)}T^\flat_{(s,t), (u,v)} = T^\flat_{(q,r),(u,v)}$, for all $(q,r)\subseteq (s,t)\subseteq (u,v)$,
can be checked by using the laws of associativity. The process is, however, lengthy and convoluted. We will highlight the key intermediate steps and leave the detailed completion to the reader.
\begin{eqnarray*}
 T^\flat_{(q,r),(s,t)}T^\flat_{(s,t), (u,v)} 
          &=&  \pi_{(q,r)}\comp(T^\flat_{s,q,r}\times \operatorname{id}_ {\Omega^\flat_{r,t}})^{-1} \comp (T^{\flat}_{s,r,t})^{-1}\comp\\ && \pi_{(s,t)}\comp (\operatorname{id}_{\Omega^\flat_{u,s}}\times T^\flat_{s,t,v})^{-1}\comp (T^{\flat} _{u,s,v})^{-1}
          \\&=& \pi_{(q,r)}\comp( T^\flat_{s,q,r}\times \operatorname{id}_ {\Omega^\flat_{r,t}})^{-1} \comp (T^{\flat}_{s,r,t})^{-1}\comp\\
           && T^\flat_{s,r,t}\comp \pi_{(s,r)\times (r,t)}
           \comp (\operatorname{id}_{\Omega^\flat_{u,s}} \times \operatorname{id}_{\Omega^\flat_{s,r}}\times T_{r,t,v})^{-1}\comp\\
           && (\operatorname{id}_{\Omega^\flat_{u,s}}\times T^\flat_{s,r,v})^{-1}\comp (T^{\flat} _{u,s,v})^{-1}\\&=&
  \pi_{(q,r)} \comp (\operatorname{id}_{\Omega^\flat_{u,s}}\times  \operatorname{id}_ {\Omega^\flat_{s,q}}\times  \operatorname{id}_ {\Omega^\flat_{q,r}}\times T_{r,t,v}^\flat)^{-1}\comp\\&& (\operatorname{id}_{\Omega^\flat_{u,s}}\times T^\flat_{s,q,r}\times  \operatorname{id}_ {\Omega^\flat_{r,v}})^{-1}\comp (\operatorname{id}_{\Omega^\flat_{u,s}}\times T^\flat_{s,r,v})^{-1}\comp ( T^{\flat} _{u,s,v})^{-1}\\  
   &=&    \pi_{(q,r)} \comp (T^\flat_{u,s,q}\times  \operatorname{id}_ {\Omega^\flat_{q,r}}\times  \operatorname{id}_ {\Omega^\flat_{r,v}})\comp(\operatorname{id}_{\Omega^\flat_{u,s}}\times T^\flat_{s,q,r}\times  \operatorname{id}_ {\Omega^\flat_{r,v}})^{-1}\comp\\&& (\operatorname{id}_{\Omega^\flat_{u,s}}\times T^\flat_{s,r,v})^{-1}\comp (T^{\flat} _{u,s,v})^{-1}                        
          \end{eqnarray*}
          where $\pi_{(s,r)\times (r,t)}: \Omega^\flat_{u,s}\times \Omega^\flat_{s,r} \times \Omega^\flat_{r,t}\times \Omega^\flat_{t,v}\to \Omega^\flat_{s,r}\times \Omega^\flat_{r,t}$ is the coordinate projection. 
Noticing that $(\operatorname{id}_{\Omega^\flat_{u,q}}\times\circ T^{\flat}_{q,r,v})^{-1} \comp T^{-\flat}_{u,q,v}= (T^\flat_{u,s,q}\times  \operatorname{id}_ {\Omega^\flat_{q,r}}\times  \operatorname{id}_ {\Omega^\flat_{r,v}})\comp(\operatorname{id}_{\Omega^\flat_{u,s}}\times T^\flat_{s,q,r}\times  \operatorname{id}_ {\Omega^\flat_{r,v}})^{-1}\comp (\operatorname{id}_{\Omega^\flat_{u,s}}\times T^\flat_{s,r,v})^{-1}\comp T^{-\flat} _{u,s,v}$, it follows that $T^\flat_{(q,r),[s,t]} T^\flat_{(s,t),(u,v)} = T^\flat_{(q,r),(u,v)}$, as required.          
\end{proof}
We conclude with the following general description of the $K$-convergence of a convergent convolution system.
\begin{theorem}\label{Kimpa} Let $\cali{S}=\{\Omega_{s,t}, \m{F}_{s,t}, \mu_{s,t}, T_{r,s,t}\}_\mathbb{S}$ be a convergent convolution system such that the projective system of sets $\{\Omega_I, X_{I,J}\}$ over $\mathcal{K}$ is simply maximal. The following conditions are equivalent: \begin{enumerate}\item The convolution system $\cali{S}$ is $K$-convergent;
\item The projective system  $\{(\Omega_{s,t}^\flat, \m{F}_{s,t}^\flat, \mu_{s,t}^\flat), T_{(s,t),(u,v)}\}$ is convergent.
\end{enumerate}
\end{theorem}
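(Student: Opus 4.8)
The plan is to exhibit the two projective limits in play — namely $(\Omega^\flat,\m{F}^\flat,P^\flat)=\varprojlim_{\m{K}}\{(\Omega_I,\m{F}_I,\mu_I),X_{I,J}\}$, whose existence is precisely $K$-convergence, and $\varprojlim_{\m{C}}\{(\Omega_{s,t}^\flat,\m{F}_{s,t}^\flat,\mu_{s,t}^\flat),T^\flat_{(s,t),(u,v)}\}$ from Proposition~\ref{projint} — as mutually an upper bound of the other system, so that the upper-bound criterion for $\sigma$-additivity recalled in Section~2 forces each to converge once the other does. The only glue relating the index sets $\m{K}$ and $\m{C}$ is Lemma~\ref{ll1}, together with the elementary facts that $X_{I,J}=T_{I,J}$ when $I\subseteq J$ lie in a common $\m{K}_{s,t}$, that $X_{I,J}X_J=X_I$, and that the canonical projections of a projective limit separate its points.

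For $(1)\Rightarrow(2)$, assume $\cali{S}$ is $K$-convergent and form $(\Omega^\flat,\m{F}^\flat,P^\flat)$ with its canonical epimorphisms $X_I\colon\Omega^\flat\to\Omega_I$. Fix $s<t$. Since $X_{I,J}=T_{I,J}$ for $I\subseteq J$ in $\m{K}_{s,t}$, the family $\{X_I\}_{I\in\m{K}_{s,t}}$ is a compatible family of morphisms into $\{(\Omega_I,\m{F}_I,\mu_I),T_{I,J}\}_{\m{K}_{s,t}}$, so the least-upper-bound property of $(\Omega_{s,t}^\flat,\m{F}_{s,t}^\flat,\mu_{s,t}^\flat)$ (available because $\cali{S}$ is convergent) furnishes a unique measure-preserving $\pi_{s,t}\colon\Omega^\flat\to\Omega_{s,t}^\flat$ with $T_I^\flat\pi_{s,t}=X_I$ for all $I\in\m{K}_{s,t}$. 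Using Lemma~\ref{ll1} one then computes $T_I^\flat T^\flat_{(s,t),(u,v)}\pi_{u,v}=X_{I,J}T_J^\flat\pi_{u,v}=X_{I,J}X_J=X_I=T_I^\flat\pi_{s,t}$ for every $I\in\m{K}_{s,t}$, so $T^\flat_{(s,t),(u,v)}\pi_{u,v}=\pi_{s,t}$; and since $\pi_{s,t}^{-1}(\m{F}_{s,t}^\flat)=\sigma\bigl(\bigcup_{I\in\m{K}_{s,t}}X_I^{-1}(\m{F}_I)\bigr)$, the preimage $\sigma$-fields jointly generate $\m{F}^\flat$. Once each $\pi_{s,t}$ is known to be an epimorphism (see below), $(\Omega^\flat,\m{F}^\flat,P^\flat)$ with $\{\pi_{s,t}\}_{s<t}$ is an upper bound of the system of Proposition~\ref{projint}, so that system is $\sigma$-additive by the Section~2 criterion; its simple maximality follows by pushing threads of the $\m{K}$-system forward along the $\pi_{s,t}$. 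Hence it is convergent.

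For $(2)\Rightarrow(1)$, assume the system of Proposition~\ref{projint} converges, with limit $(\Omega^\sharp,\m{F}^\sharp,P^\sharp)$ and canonical epimorphisms $\Pi_{s,t}\colon\Omega^\sharp\to\Omega_{s,t}^\flat$. For $K\in\m{K}$ with $K\in\m{K}_{s,t}$ set $\Sigma_K:=T_K^\flat\circ\Pi_{s,t}\colon\Omega^\sharp\to\Omega_K$, an epimorphism as a composite of epimorphisms. If $I\subseteq K$ with $I\in\m{K}_{p,q}$ (so $(p,q)\subseteq(s,t)$ in $\m{C}$), then Lemma~\ref{ll1} gives $X_{I,K}T_K^\flat=T_I^\flat T^\flat_{(p,q),(s,t)}$, hence $X_{I,K}\Sigma_K=T_I^\flat T^\flat_{(p,q),(s,t)}\Pi_{s,t}=T_I^\flat\Pi_{p,q}=\Sigma_I$ by compatibility of the $\Pi_{s,t}$. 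Since moreover $\bigcup_{K\in\m{K}}\Sigma_K^{-1}(\m{F}_K)$ generates $\m{F}^\sharp$ (each $\m{F}_{s,t}^\flat$ being generated by the $(T_I^\flat)^{-1}(\m{F}_I)$, $I\in\m{K}_{s,t}$), the space $(\Omega^\sharp,\m{F}^\sharp,P^\sharp)$ with $\{\Sigma_K\}_{K\in\m{K}}$ is an upper bound of $\{(\Omega_I,\m{F}_I,\mu_I),X_{I,J}\}$; that system is simply maximal by the standing hypothesis, hence convergent, i.e.\ $\cali{S}$ is $K$-convergent.

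The step I expect to be the main obstacle is the surjectivity of the comparison maps $\pi_{s,t}$ in $(1)\Rightarrow(2)$ (measure-preservation on the generating field being immediate from $P^\flat(X_I^{-1}(A))=\mu_I(A)$). I would establish it by restricting the $\m{K}$-system to the cofinal subposet $\{K\in\m{K}\mid s,t\in K\}$: there the boundaries $s$ and $t$ survive every refinement, so $\Omega_K$ and the maps $X_{I,K}$ split as a product of three independent blocks, over $[\min K,s]$, over $[s,t]$, and over $[t,\max K]$; passing to the limit identifies $\Omega^\flat$ with a cartesian product whose middle factor is $\Omega_{s,t}^\flat$ and whose two outer factors are non-empty by simple maximality of the $\m{K}$-system, and identifies $\pi_{s,t}$ with the (surjective) projection onto the middle. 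The only other nuisance is keeping track of the degenerate cases of Lemma~\ref{ll1} in which an endpoint of $(p,q)$ meets one of $(s,t)$, but these reduce to the generic case.
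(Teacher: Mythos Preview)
Your proof follows essentially the same route as the paper's: in each direction you exhibit the limit of one system as an upper bound of the other, with Lemma~\ref{ll1} supplying the compatibility $T_I^\flat T^\flat_{(s,t),(u,v)}=X_{I,J}T_J^\flat$ that links the two indexing posets. Your treatment is in fact more careful than the paper's in verifying the conditions of the upper-bound criterion --- the surjectivity of the comparison maps $\pi_{s,t}$, the generating property of the pulled-back $\sigma$-fields, and the simple maximality of the $\m{C}$-system --- all of which the paper asserts or leaves implicit.
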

\begin{proof}
(1) $\Rightarrow$ (2). Suppose that $\cali{S}$ is $K$-convergent. Because $T_{I,J}X_J=X_I$, for any two elements $s<t$ in $\mathbb{S}$ and any two partitions $I,\,J\in\m{K}_{s,t}$, $I\subseteq J$, it follows that there exists a surjective transformation of probability spaces $X_{s,t}^\flat:  (\Omega^\flat, \m{F}^\flat, P^\flat)\to (\Omega^\flat_{s,t}, \m{F}_{s,t}^\flat, \mu_{s,t}^\flat)$ such that \begin{eqnarray*}\label{ess}T^\flat_IX_{s,t}^\flat=X_I,\end{eqnarray*} for all $I\in\m{K}_{s,t}$ and all $s<t$ in $\mathbb{S}$. Using Lemma \ref{ll1}, one can immediately see that $T^\flat_{(s,t), (u,v)}X^\flat_{u,v}=X^\flat_{s,t}$, for all $u< s< t<v$ in $\mathbb{S}$. Indeed, $$T^\flat_IT^\flat_{(s,t), (u,v)}X^\flat_{u,v}=X_{I,J}T^\flat _JX^\flat_{u,v}=X_{I,J}X_J=X_I,$$
for all $I\in\m{K}_{s,t}$, $J\in \m{K}_{u,v}$, $I\subseteq J$, and the conclusion follows.
 Consequently $(\Omega^\flat, \m{F}^\flat, P^\flat)$ is an upper limit for the projective system $\{(\Omega_{s,t}^\flat, \m{F}_{s,t}^\flat, \mu_{s,t}^\flat), T_{(s,t),(u,v)}\}$, so this must be convergent. 
 
(2) $\Rightarrow$ (1). Suppose that the projective system $\{(\Omega_{s,t}^\flat, \m{F}_{s,t}^\flat, \mu_{s,t}^\flat), T_{(s,t),(u,v)}\}$ is convergent, and let \begin{eqnarray}\label{hiis}
({}^\flat\Omega, {}^\flat\m{F}, {}^\flat P)=\limproj_{\m{C}}\{(\Omega_{s,t}^\flat, \m{F}_{s,t}^\flat, \mu_{s,t}^\flat), T_{(s,t),(u,v)}\}\end{eqnarray} be its projective limit with coordinate projections $T'_{s,t}:({}^\flat\Omega, {}^\flat\m{F}, {}^\flat P)\to (\Omega_{s,t}^\flat, \m{F}_{s,t}^\flat, \mu_{s,t}^\flat)$, for all $s<t$ in $\mathbb{S}$. For any $I\in\m{K}$, say $I\in\m{K}_{s,t}$, for some $s<t$ in $\mathbb{S}$, consider the composite surjective transformation
$T_I^\flat T'_{s,t}: ({}^\flat\Omega, {}^\flat\m{F}, {}^\flat P)\to (\Omega_I, \m{F}_I, \mu_I)$. We deduce from Lemma \ref{ll1} that $T_I^\flat T'_{s,t}=X_{I, J}T^\flat _JT'_{u,v}$
for all $I\subseteq J$, $I\in \mathcal{K}_{s,t}$, $J\in\mathcal{K}_{u,v}$, $u\leq s< t\leq v$. Thus $({}^\flat\Omega, {}^\flat\m{F}, {}^\flat P)$ is a lower bound for $\{(\Omega_I, \m{F}_I, \mu_I), X_{I,J}\}$ over  $(\mathcal{K},\subseteq)$, which leads to the convergence of the projective system  $\{(\Omega_I, \m{F}_I, \mu_I), X_{I,J}\}$.
\end{proof}
It is worth mentioning that if $\cali{S}=\{\Omega_{s,t}, \m{F}_{s,t}, \mu_{s,t}, T_{r,s,t}\}_\mathbb{S}$ is a $K$-convergent convolution system, then the probability space $ ({}^\flat\Omega, {}^\flat\m{F}, {}^\flat P)$ defined in (\ref{hiis}) is the greatest lower bound for the projective system $\{(\Omega_I, \m{F}_I, \mu_I), X_{I,J}\}$ over  $(\mathcal{K},\subseteq)$. We postpone the proof of this fact, as well as the discussion of some relevant examples, for another occasion.


\begin{bibdiv}
\begin{biblist}

\bib{Arveson89}{article}{
   author={Arveson, William},
  title={Continuous analogues of Fock space},
  journal={Mem. Amer. Math. Soc.},
   volume={80},
   date={1989},
   number={409},
   pages={iv+66},
}

\bib{Arveson-book}{book}{
   author={Arveson, William},
   title={Noncommutative dynamics and $E$-semigroups},
   series={Springer Monographs in Mathematics},
   publisher={Springer-Verlag, New York},
   date={2003},
   pages={x+434},
   isbn={0-387-00151-4},
}

\bib{Bochner}{book}{
   author={Bochner, Salomon},
   title={Harmonic analysis and the theory of probability},
   publisher={University of California Press, Berkeley-Los Angeles, Calif.},
   date={1955},
   pages={viii+176},
}

\bib{Bhat-Skeide}{article}{
   author={Bhat, B. V. Rajarama},
   author={Skeide, Michael},
   title={Tensor product systems of Hilbert modules and dilations of
   completely positive semigroups},
   journal={Infin. Dimens. Anal. Quantum Probab. Relat. Top.},
   volume={3},
   date={2000},
   number={4},
   pages={519--575},
}

\bib{Chol}{article}{
   author={Choksi, J. R.},
   title={Inverse limits of measure spaces},
   journal={Proc. London Math. Soc. (3)},
   volume={8},
   date={1958},
   pages={321--342},
}

\bib{FK}{article}{
   author={Floricel, Remus},
   author={Ketelboeter, Brian},
   title={$C^\ast$-subproduct and product systems},
   journal={J. Math. Anal. Appl.},
   volume={528},
   date={2023},
   number={1},
   pages={Paper No. 127485, 31},
}

\bib{Frol}{article}{
   author={Frol\'{\i}k, Zden\v{e}k},
   title={Projective limits of measure spaces},
   conference={
      title={Proceedings of the Sixth Berkeley Symposium on Mathematical
      Statistics and Probability},
      address={Univ. California, Berkeley, Calif.},
      date={1970/1971},
   },
   book={
      publisher={Univ. California Press, Berkeley, CA},
   },
   date={1972},
   pages={67--80},
}

\bib{Tao}{article}{
   author={Jamneshan, Asgar},
   author={Tao, Terence},
   title={Foundational aspects of uncountable measure theory: Gelfand duality, Riesz representation, canonical models, and canonical disintegration},
   journal={ arXiv:2010.00681v3},
}

\bib{HM}{book}{
   author={H\"{o}gn\"{a}s, G\"{o}ran},
   author={Mukherjea, Arunava},
   title={Probability measures on semigroups},
   series={Probability and its Applications (New York)},
   edition={2},
   note={Convolution products, random walks, and random matrices},
   publisher={Springer, New York},
   date={2011},
   pages={xii+430},
}

\bib{Henkin}{article}{
   author={Henkin, Leon},
   title={A problem on inverse mapping systems},
   journal={Proc. Amer. Math. Soc.},
   volume={1},
   date={1950},
   pages={224--225},
}
		
\bib{KR}{book}{
   author={Kadison, Richard V.},
   author={Ringrose, John R.},
   title={Fundamentals of the theory of operator algebras. Vol. II},
   series={Pure and Applied Mathematics},
   volume={100},
   note={Advanced theory},
   publisher={Academic Press, Inc., Orlando, FL},
   date={1986},
   pages={i--xiv and 399--1074},
}

\bib{Lieb}{article}{
   author={Liebscher, Volkmar},
   title={Random sets and invariants for (type II) continuous tensor product
   systems of Hilbert spaces},
   journal={Mem. Amer. Math. Soc.},
   volume={199},
   date={2009},
   number={930},
   pages={xiv+101},
}
\bib{Rao71}{article}{
   author={Rao, M. M.},
   title={Projective limits of probability spaces},
   journal={J. Multivariate Anal.},
   volume={1},
   date={1971},
   number={1},
   pages={28--57},
   issn={0047-259X},
}

\bib{TV}{article}{
   author={Tsirelson, B. S.},
   author={Vershik, A. M.},
   title={Examples of nonlinear continuous tensor products of measure spaces
   and non-Fock factorizations},
   journal={Rev. Math. Phys.},
   volume={10},
   date={1998},
   number={1},
   pages={81--145},
   issn={0129-055X},
}

\bib{Tsi03}{article}{
   author={Tsirelson, Boris},
   title={Non-isomorphic product systems},
   conference={
      title={Advances in quantum dynamics},
      address={South Hadley, MA},
      date={2002},
   },
   book={
      series={Contemp. Math.},
      volume={335},
      publisher={Amer. Math. Soc., Providence, RI},
   },
   date={2003},
   pages={273--328},
}

\bib{Tsi04}{article}{
   author={Tsirelson, Boris},
   title={Nonclassical stochastic flows and continuous products},
   journal={Probab. Surv.},
   volume={1},
   date={2004},
   pages={173--298},
   issn={1549-5787},
}

\bib{Waterhouse}{article}{
   author={Waterhouse, William C.},
   title={An empty inverse limit},
   journal={Proc. Amer. Math. Soc.},
   volume={36},
   date={1972},
   pages={618},
}

\end{biblist}
\end{bibdiv}

\end{document}